\newcommand*\circled[1]{\tikz[baseline=(char.base)]{
   \node[shape=circle,draw,inner sep=1pt] (char) {#1};}}
\newcommand\cf[1]{\circled{\makebox[1.2cm]{#1}}}
\NewDocumentCommand\DownArrow{O{2.0ex} O{black}}{%
   \mathrel{\tikz[baseline] \draw [<-, line width=0.5pt, #2] (0,0) -- ++(0,#1);}
}
\let\c@equation\c@figure
\numberwithin{equation}{section}
\newtheorem{thm}[equation]{Theorem}
\theoremstyle{definition}
\newtheorem{dfn}[equation]{Definition}
\newtheorem{ex}[equation]{Example}
\theoremstyle{remark}
\newtheorem{rem}[equation]{Remark}
\newcommand{\thmref}[1]{Theorem~\ref{#1}}
\newcommand{\dfnref}[1]{Definition~\ref{#1}}
\newcommand{\remref}[1]{Remark~\ref{#1}}
\newcommand{\exref}[1]{Example~\ref{#1}}
\newcommand{\figref}[1]{Figure~\ref{#1}}
\newcommand{\secref}[1]{Section~\ref{#1}}
\newcommand{\myboxed}[1]{\text{\fboxsep=.2em\fbox{\m@th$\displaystyle#1$}}}
\newcommand{\A}{{\mathcal{A}}}
\newcommand{\ab}{{\boldsymbol{a}}}
\newcommand{\al}{{\alpha}}
\newcommand{\au}{\upalpha}
\newcommand{\B}{{\mathcal{B}}}
\newcommand{\bb}{{\boldsymbol{b}}}
\newcommand{\be}{\beta}
\newcommand{\bs}{\backslash}
\newcommand{\C}{\operatorname{\mathfrak Cay}}
\newcommand{\Cb}{{\mathbf C}}
\newcommand{\cb}{{\boldsymbol{c}}}
\newcommand{\CC}{\mathcal C}
\newcommand{\cc}{\circlearrowright}
\newcommand{\D}{\Delta}
\newcommand{\de}{\delta}
\newcommand{\du}{\mathop{\uparrow\!\downarrow}}
\newcommand{\E}{\mathcal{E}}
\newcommand{\F}{\Phi}
\newcommand{\fun}{\operatorname{fun}}
\newcommand{\ii}{\!\!}
\newcommand{\jj}{\!\!\!\!}
\newcommand{\G}{\Gamma}
\newcommand{\g}{\gamma}
\newcommand\ka{\kappa}
\newcommand{\Lc}{\mathcal{L}}
\newcommand{\mapstoto}{\mathop{\,\sim\joinrel\rightsquigarrow\,}}
\newcommand{\N}{\mathbb {N}}
\newcommand{\ora}{\overrightarrow}
\newcommand{\Pc}{\mathcal{P}}
\newcommand{\Sb}{{\mathbf S}}
\newcommand{\sd}{\rightthreetimes}
\newcommand{\Sch}{\operatorname{\mathfrak Sch}}
\newcommand{\Sf}{{\mathfrak S}}
\newcommand{\Si}{{\Sigma}}
\newcommand{\Ss}{{\mathscr{S}}}
\newcommand{\T}{{\mathbf T}}
\newcommand{\Tb}{{\mathbb T}}
\newcommand{\uu}{{\boldsymbol{u}}}
\newcommand{\V}{{\mathcal{V}}}
\newcommand{\vv}{{\boldsymbol{v}}}
\newcommand{\vn}{\varnothing}
\newcommand{\ww}{{\boldsymbol{w}}}
\newcommand{\wt}{\widetilde}
\newcommand{\Z}{{\mathbb Z}}
\newcommand{\z}{\zeta}
\begin{document}

\author{Vadim A.\ Kaimanovich}

\address{Department of Mathematics and Statistics, University of
Ottawa, 585 King Edward, Ottawa ON, K1N 6N5, Canada}

\email{vkaimano@uottawa.ca, vadim.kaimanovich@gmail.com}

\title[Circular slider graphs]{Circular slider graphs: de Bruijn, Kautz, Rauzy, lamplighters and spiders}

\subjclass[2010]{Primary 05C20, 20E22; Secondary 05C80, 37B10, 68R15}

\keywords{De Bruijn graphs, lamplighter groups}

\begin{abstract}
We suggest a new point of view on de Bruijn graphs and their subgraphs based on using circular words rather than linear ones.
\end{abstract}

\maketitle

\thispagestyle{empty}

\section*{Introduction}

\subsection{De Bruijn graphs}

De Bruijn graphs represent overlaps between consecutive subwords of the same length in a longer word. Under various names and in various guises they and their subgraphs currently enjoy a lot of popularity in mathematics (dynamical systems and combinatorics) as well as in the applications to computer science (data networks) and bioinformatics (DNA sequencing).

A very succinct description of these graphs can be found in the following two-line rhyme from the title of the 1975 de Bruijn's historical note \cite{deBruijn75}. The
$$
\parbox{.6\linewidth}{\emph{Circular arrangements of $2^n$ zeros and ones\\ That show each $n$-letter word exactly once}}
$$
he is talking about are precisely the Hamiltonian cycles (currently known as \textsf{de Bruijn sequences}) \footnotemark \; in the \textsf{de Bruijn graph
$$
\ora\B_{\ii\A}^n=\ora\B_{\ii|\A|}^n
$$
of span $n$}. This is the directed graph (\textsf{digraph}) whose vertices are all $n$-letter words $\ab=\al_1 \al_2 \dots \al_n$ in a given finite alphabet $\A$ (quite often $\A$ is just the binary alphabet $\{0,1\}$, like in de Bruijn's formulation above), and whose arrows (directed edges)
\begin{equation} \label{eq:usual}
\ab=\al_1 \al_2 \dots \al_n \mapstoto \ab'=\al_2 \dots \al_n\al_{n+1}
\end{equation}
are the pairs of $n$-words with a length $n-1$ overlap, so that the associated transitions (we call them \textsf{de Bruijn transitions}) consist in removing the initial letter $\al_1$ of an $n$-word and adding instead a new letter $\al_{n+1}$ at the end of the word.

\footnotetext{\;Throughout the paper we use \textsf{sans serif} when giving a definition or introducing a notation, whereas \emph{italic} is used for emphasizing and in quotes (as usual) or when mentioning a certain term for the first time without defining it.}

\subsection{Circular vs linear}

The purpose of this note is to suggest a new point of view on de Bruijn graphs and their subgraphs based on using \emph{circular words} rather than \emph{linear} ones. Although the idea of circularity (of de Bruijn sequences) has been present in the subject area ever since the very first known formulation of this setup in 1894 (see \secref{sec:dB} for more historical details), and the notions of \emph{necklaces} and \emph{Lyndon words} play a pivotal role in various algorithms for generating de Bruijn sequences (e.g., see Fredricksen -- Maiorana \cite{Fredricksen-Maiorana78}, Perrin -- Restivo \cite{Perrin-Restivo15}, or the latest Sawada -- Williams -- Wong \cite{Sawada-Williams-Wong16, Sawada-Williams-Wong17}, and the references therein), it has never been applied to the vertex $n$-letter words themselves. They have always been treated as linear words with the ``giving'' and ``receiving'' ends (the one that loses a letter and the one that acquires a new letter in the process of a de Bruijn transition, respectively) being $n$ symbols apart.~\footnote{\;The only exception we are aware of is a recent article by B\"ohmov\'a -- Dalf\'o -- Huemer \cite{Bohmova-Dalfo-Huemer15} and the ensuing papers by Dalf\`o \cite{Dalfo17a,Dalfo17} where \emph{cyclic Kautz graphs} were introduced and studied.}

Our approach is based on the totally obvious observation that there is a one-to-one correspondence between linear words and \textsf{pointed circular words} of the same length (the \textsf{pointer} separates the initial and the final letter of the linear word written clockwise). In a more formal language we replace the $\Z$-valued indices $i=1,2,\dots,n$ which parameterize the letters $\al_i$ of a word $\ab=\al_1 \al_2 \dots \al_n$ with the $\Z_n$-valued indices $\iota = i \;(\!\!\!\!\mod n)$, so that the pointer is positioned between the letters with the indices $\iota=n\equiv 0 \;(\!\!\!\!\mod n)$ and $\iota=1$ $(\!\!\!\!\mod n)$. In this interpretation de Bruijn transitions \eqref{eq:usual} consist in moving the pointer one position clockwise and (possibly) changing the letter located between the old and the new positions of the pointer. Equivalently, moving the pointer along a fixed word is the same as moving the whole word in the opposite direction with respect to a fixed pointer, so that in terms of the \textsf{circular shift} (anticlockwise rotation)
\begin{equation} \label{eq:shift}
(\Sb\ab)_\iota = (\ab)_{\iota+1 \;(\!\!\!\!\!\!\mod n)} \;,\qquad \ab\in\A^n\cong\A^{\Z_n} \;,
\end{equation}
de Bruijn transitions are
\begin{equation} \label{eq:cyclic}
\ab \mapstoto \ab' \iff (\ab')_\iota = (\Sb\ab)_\iota \quad\forall\, \iota\neq 0
\iff
\al'_\iota=\al_{\iota+1 \;(\!\!\!\!\!\!\mod n)} \quad\forall\, \iota\neq 0
\;.
\end{equation}

\subsection{Slider graphs}

Actually, we find it more convenient to think, instead of a pointer, about a sliding window \textsf{(slider)} of width 2 which covers one letter on either side of the pointer (we call a circular word endowed with a window like this \textsf{slider pointed}). From this point of view the usual de Bruin graph $\ora\B_{\ii\A}^n=\ora\B_{\ii|\A|}^n$ can be identified with the \textsf{full circular slider graph
of span~$n$ over an alphabet $\A$}. This is the digraph whose vertices are all slider pointed circular words of length $n$ in alphabet~$\A$, and whose arrows are the \textsf{slider transitions} which consist in moving the slider one position clockwise and (possibly) replacing the letter at the intersection of the old and the new slider windows, see \figref{fig:intro}.

\begin{figure}[h]
\begin{center}
\psfrag{a}[cl][cl]{{\rotatebox[origin=c]{60}{$\!\!\!\!\!\al_{n-1}$}}}
\psfrag{b}[cl][cl]{{\rotatebox[origin=c]{30}{$\al_n$}}}
\psfrag{c}[cl][cl]{$\al_1$}
\psfrag{d}[cl][cl]{{\rotatebox[origin=c]{-30}{$\al_2$}}}
\psfrag{e}[cl][cl]{{\rotatebox[origin=c]{-55}{$\al_3$}}}
\psfrag{v}[cl][cl]{{\rotatebox[origin=c]{85}{$\!\!\!\!\cdots$}}}
\psfrag{w}[cl][cl]{{\rotatebox[origin=c]{-85}{$\cdots$}}}
\hfil
\includegraphics[scale=.15]{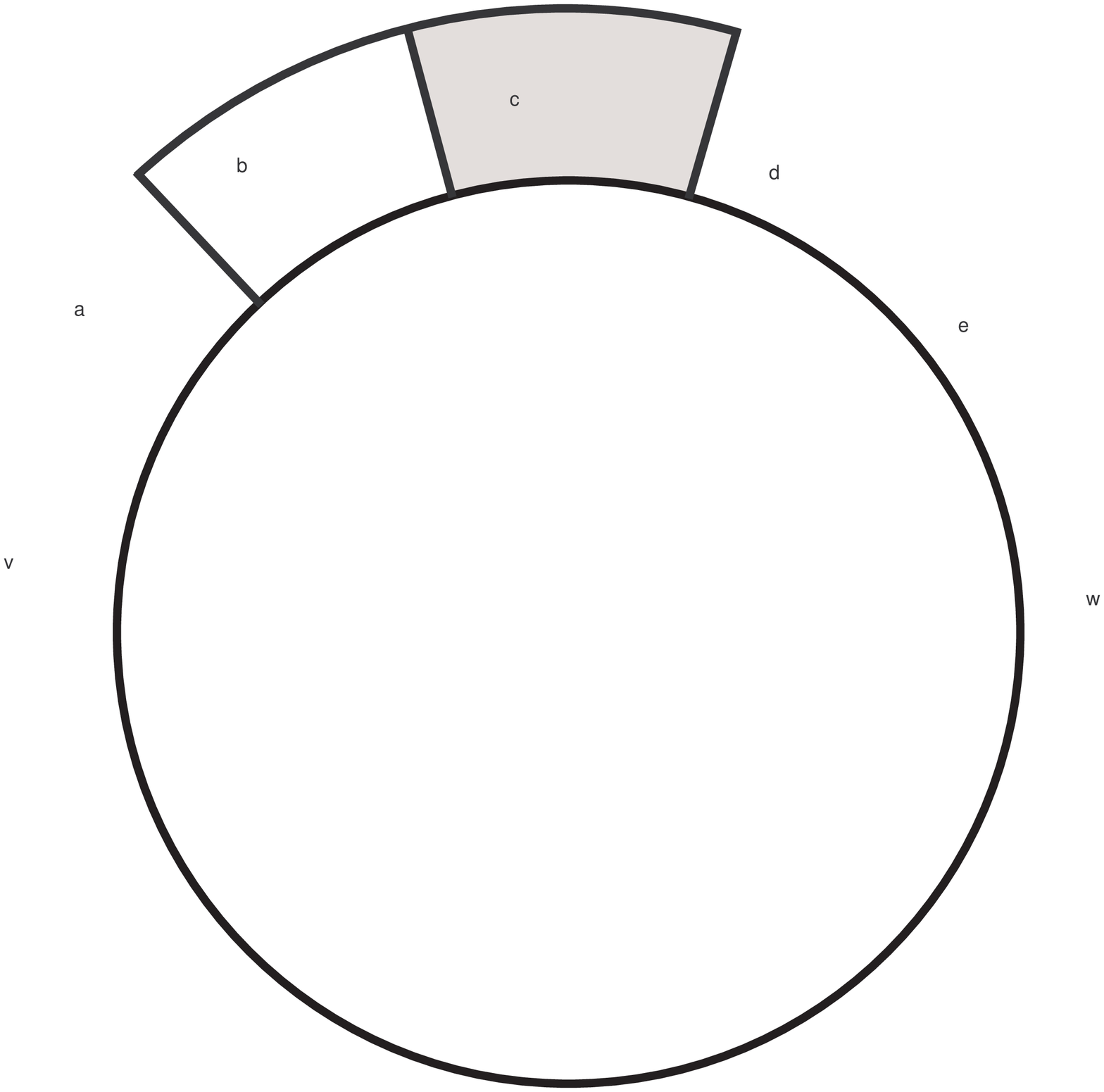}
\hfil
\raisebox{15mm}{\text{\Large$\mapstoto$}}
\hfil
\psfrag{c}[cl][cl]{$\!\!\!\al_{n+1}$}
\includegraphics[scale=.15]{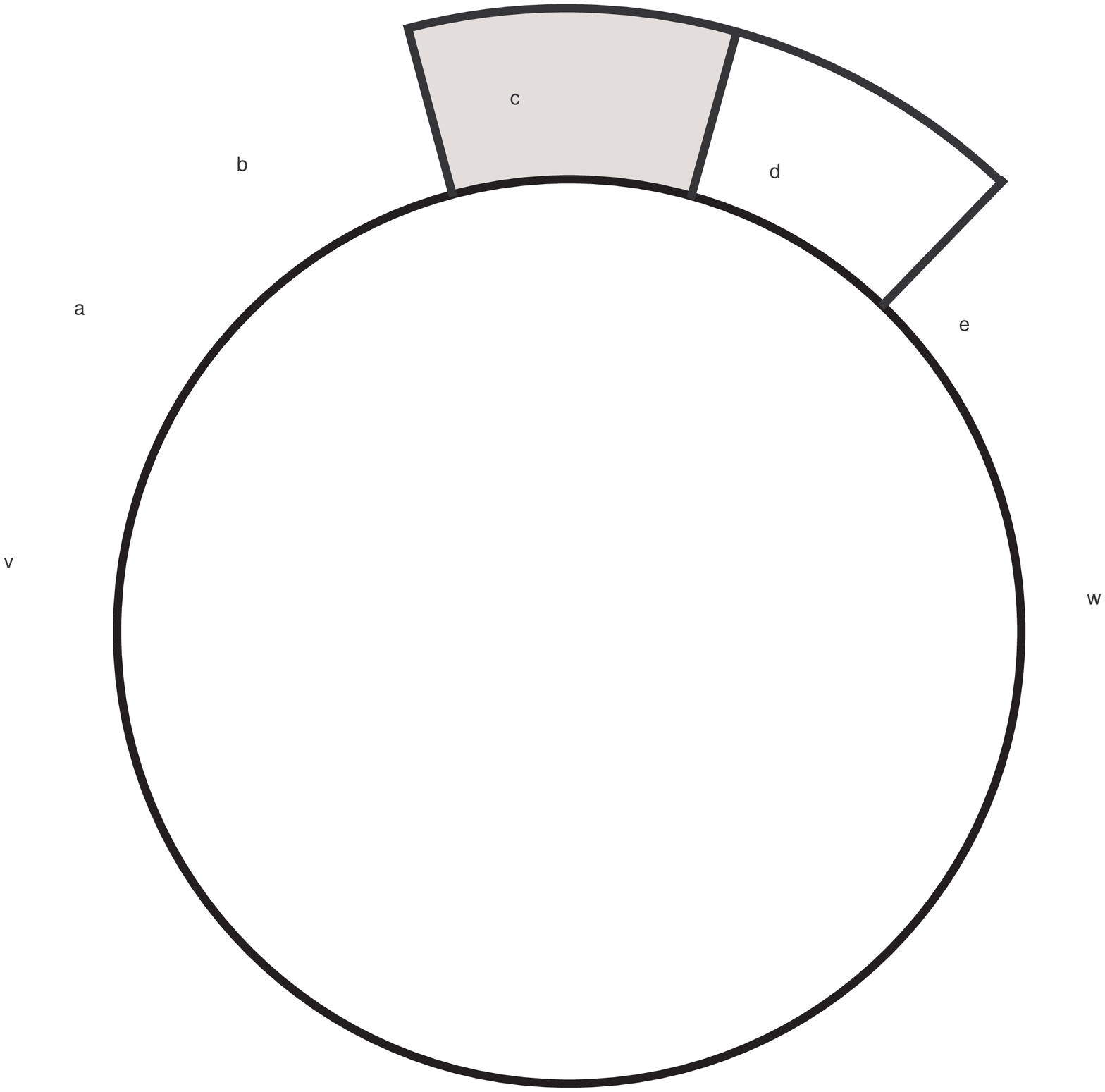}
\hfil\hfil\hskip 1cm
\end{center}
\vskip 1cm
\begin{center}
\psfrag{a}[cl][cl]{{\rotatebox[origin=c]{60}{$\!\!\!\al_{-1}$}}}
\psfrag{b}[cl][cl]{{\rotatebox[origin=c]{30}{$\al_0$}}}
\psfrag{c}[cl][cl]{$\al_1$}
\psfrag{d}[cl][cl]{{\rotatebox[origin=c]{-30}{$\al_2$}}}
\psfrag{e}[cl][cl]{{\rotatebox[origin=c]{-55}{$\al_3$}}}
\psfrag{v}[cl][cl]{{\rotatebox[origin=c]{85}{$\!\!\!\!\cdots$}}}
\psfrag{w}[cl][cl]{{\rotatebox[origin=c]{-85}{$\cdots$}}}
\hfil
\includegraphics[scale=.15]{sliderleft1.eps}
\hfil
\raisebox{15mm}{\text{\Large$\mapstoto$}}
\hfil
\psfrag{c}[cl][cl]{$\al'_1$}
\includegraphics[scale=.15]{sliderright1.eps}
\hfil\hfil\hskip 1cm
\end{center}
\caption{Slider transitions between circular $n$-words in the ``conventional'' parameterization with letter indices $i\in\Z$ and in the cyclic parameterization with letter indices $\iota\in\Z_n$.}
\label{fig:intro}
\end{figure}

A general (directed) \textsf{circular slider graph} is then defined as a subgraph of the full circular slider graph. In particular, this class includes the \textsf{induced circular slider graphs} $\ora\Ss[\V]$ obtained by restricting the full circular slider graph $\ora\B_{\ii\A}^n$ to a subset of vertices $\V\subset\A^n$. One can also further modify circular slider graphs by introducing various \emph{decorations} or \emph{labellings} of their vertices and arrows.

\subsection{Motivation}

The motivation for our work comes from two sources. The first one is the isomorphism of the Cayley graphs $\C(\Z\wr\Z_m,Q)$ of the \emph{lamplighter groups}
$$
\Z\wr\Z_m = \Z \sd \fun(\Z,\Z_m)
$$
(here $\fun(\Z,\Z_m)$ denotes the additive group of finitely supported $\Z_m$-valued configurations on $\Z$) with the corresponding \textsf{Diestel -- Leader graphs} ($\equiv$ \emph{horospheric products} $\Tb_{m+1}\du\Tb_{m+1}$ of two copies of pointed at infinity \emph{homogeneous trees} $\Tb_{m+1}$ of degree $m+1$) discovered by M\"oller, Neumann and Woess (see the historical accounts by Woess \cite{Woess13} and Amchislavska -- Riley \cite{Amchislavska-Riley15}). The generating set~$Q$ of the group $\Z\wr\Z_m$, for which this isomorphism is established, consists of the set $Q_+$ of the \textsf{walk-right---switch generators}
$$
\left(1,\de_1^b\right) = (1,\vn) \cdot \left(0,\de_0^b\right)
$$
and of the set $Q_-$ of their inverses (the \textsf{switch---walk-left generators})
$$
\left(1,\de_1^b\right)^{-1}  = \left(0,\de_0^{-b}\right) \cdot (-1,\vn)= \left(-1,\de_0^{-b}\right)
$$
(where $b$ runs through $\Z_m$, the zero configuration on $\Z$ is denoted by $\vn$, and $\de_i^b$ denotes the configuration on $\Z$ with $\de_i^b(i)=b$ and $\de_i^b(j)=0$ for $j\neq i$). In order to make the symmetry in the definition of these generators more explicit, we shall ``mark'' the integer line $\Z$ with, instead of lamplighter's position $z\in\Z$, the width 2 \textsf{slider} (``lamplighter's window'') over the positions $z$ and $z+1$. Then the multiplication by an element $\left(1,\de_1^b\right)\in Q_+$ (resp., by its inverse $\left(-1,\de_0^{-b}\right)\in Q_-$) amounts to moving the slider one position to the right (resp., to the left) and adding $b$ (resp.,~$-b$) to the state at the intersection of the old and the new sliders. These are precisely the slider transitions (and their inverses) like on \figref{fig:intro} above between configurations ($\equiv$ strings) from $\fun(\Z,\Z_m)$.

The second source is the link between de Bruijn graphs and lamplighter groups recently discovered and studied by Grigorchuk -- Leemann -- Nagnibeda \cite{Grigorchuk-Leemann-Nagnibeda16} and Leemann \cite{Leemann16} who, for a fixed alphabet size $m$, identified the \emph{Benjamini -- Schramm limits} of de Bruijn graphs $\ora\B_{\ii m}^n$ as $n\to\infty$ with the corresponding directed versions of the Diestel -- Leader graphs $\Tb_{m+1}\du\Tb_{m+1}$. Their approach is algebraic, and it is based on a presentation of the de Bruijn graph $\ora\B_{\ii m}^n$ as the \emph{Schreier graph} of a natural spherically transitive action of the associated lamplighter group $\Z\wr\Z_m$ on the $n$-th level of the $m$-regular rooted tree. Moreover, by using the classification of the subgroups of $\Z\wr\Z_m$ obtained by Grigorchuk -- Kravchenko \cite{Grigorchuk-Kravchenko14}, they also identify the tensor ($\equiv$ direct) products of de Bruijn graphs~$\ora\B_{\ii m}^n$ and \textsf{directed cycle graphs} $\ora\CC_{\ii k}$ (these products are known as \textsf{spider-web graphs}) with the Cayley graphs of appropriate finite groups for a number of combinations of the parameters $k,m,n$. In particular, for $k=n$ the spider-web graph $\ora\CC_{\ii n}\otimes\ora\B_{\ii m}^n$ turns out to be isomorphic to the Cayley digraph $\ora\C(\Z_n\wr\Z_m,Q_+)$ of the finite lamplighter group $\Z_n\wr \Z_m$ with respect to the set $Q_+$ of the walk-right---switch generators.

The immediate origin of the present paper is the project in progress \cite{Kaimanovich-Leemann-Nagnibeda} (joint with Paul-Henry Leemann and Tatiana Nagnibeda) aimed at understanding to what extent the results on Benjamini -- Schramm limits from \cite{Grigorchuk-Leemann-Nagnibeda16, Leemann16} could be carried over to generalizations of de Bruijn graphs, and in a sense the present paper can be considered as prolegomena to \cite{Kaimanovich-Leemann-Nagnibeda}. The work on this project started during my gratefully acknowledged visit to the University of Geneva in February-March 2017.

\subsection{New perspective} \label{sec:perspective}

Of course, formally the circular slider graphs are just subgraphs of the corresponding full de Bruijn graphs, and a number of examples of this kind were considered before (we review them in \secref{sec:circularexamples}). However, our point of view brings in a new perspective.

\medskip

\noindent
$\bullet$\;
We make the aforementioned relationship between de Bruijn graphs and finite lamplighter groups completely transparent. Indeed, as we have already explained, the arrows in $\ora\C(\Z\wr\Z_m,Q_+)$ are precisely our slider transitions between infinite words. The same observation holds for the lamplighter groups $\Z_n\wr\Z_m$ over a finite cyclic group $\Z_n$ as well (with, \emph{mutatis mutandis}, $Q$ and $Q_\pm$ now denoting the corresponding subsets of $\Z_n\wr\Z_m$), so that the only difference between the de Bruijn ($\equiv$ slider) graph $\ora\B_{\ii m}^n$ and the Cayley digraph $\ora\C(\Z_n\wr\Z_m,Q_+)$ is that in the case of slider graphs the length $n$ circular words in the alphabet $\Z_m$ are marked only once (with the position of the slider), whereas in the case of lamplighter groups the circular words are endowed with two pointers (both the position of the identity of the group $\Z_n$ and the position of the lamplighter). Therefore, in order to describe the slider graphs in terms of the lamplighter group one has to eliminate the additional pointer, or, in other words, to pass from the Cayley digraph $\ora\C(\Z_n\wr\Z_m,Q_+)$ to the corresponding \emph{Schreier digraph}
$\ora\Sch\left(\Z_n\bs\left(\Z_n\wr\Z_m\right),Q_+\right)$ on the quotient of the wreath product $\Z_n\wr\Z_m$ by the cyclic subgroup of translations $\{(z,\vn):z\in\Z_n\}\cong\Z_n$ acting on the left. This Schreier digraph is nothing else than the de Bruijn graph~$\ora\B_{\ii m}^n$, and, conversely, the tensor product $\ora\CC_{\ii n}\otimes \ora\B_{\ii m}^n$ is precisely the Cayley digraph $\ora\C(\Z_n\wr\Z_m,Q_+)$.

\medskip

\noindent
$\bullet$\; Our approach gives rise to new interesting classes of circular slider graphs (i.e., of subgraphs of full de Bruijn graphs) inherently linked with the presence of a circular structure. In \secref{sec:periodic} we introduce the \textsf{periodic slider graphs}, which are the induced subgraphs on subsets $\V\subset\A^n$ invariant with respect to the circular shift $\Sb$ \eqref{eq:shift}. In \secref{sec:transMark} we introduce the \textsf{transversally Markov circular slider graphs}, for which one imposes an additional admissibility condition on the replacements $\al_1\mapstoto\al_{n+1}$ in the slider transitions from \figref{fig:intro}, i.e., $\al_1\mapstoto\al_{n+1}=\al_1'$ has to be an arrow of a certain digraph ($\equiv$ topological Markov chain) on the alphabet~$\A$. In the particular case when $\A$ is endowed with the structure of a Cayley or a Schreier digraph this definition gives rise to
what we call \textsf{Cayley} and \textsf{Schreier circular slider graphs}, respectively.

\medskip

\noindent
$\bullet$\;
In spite of the enormous popularity of de Bruijn graphs and their various modifications, there have been very few attempts to extend this notion to infinite graphs. Laarhoven -- de Weger \cite{Laarhoven-deWeger13} in the course of a discussion of a link between de Bruijn graphs and the famous $3n+1$ Collatz conjecture (see \exref{ex:collatz}) introduced the infinite \emph{$p$-adic de Bruijn graph} with the vertex set $\A^{\Z_+}$ and the arrows $\ww\mapstoto \Sb\ww$, where $p=|\A|$, and $\Sb:\al_0\al_1\dots\mapsto\al_1\al_2\dots$ is the (unilateral) shift transformation on $\A^{\Z_+}$.
However, this graph does not really grasp a number of significant features of finite de Bruijn graphs, as, for instance, the out-degree of any vertex is 1.

On the other hand, as we have noticed when talking about the slider interpretation of the Cayley graph of the lamplighter group $\Z\wr\Z_m$, an advantage of our approach is that the definition of slider transitions is applicable without much difference both to finite circular and to infinite linear words. Therefore, by letting the span $n$ in the definition of circular slider graphs go to infinity, one naturally arrives at the notion of the \textsf{full linear slider graph} $\ora\Ss_{\jj\A}$ as the digraph whose vertex set is $\A^\Z$ and whose arrows are the slider transitions from the bottom half of \figref{fig:intro} with ``$n=\infty$''. General \textsf{linear slider graphs} are further defined as the subgraphs of~$\ora\Ss_{\jj\A}$. We shall return to a discussion of linear slider graphs and their relationship with the horospheric products of trees, lamplighter groups over $\Z$ and their Schreier graphs elsewhere.

\medskip

\noindent
$\bullet$\;
Neighbourhoods in circular and in linear slider graphs look precisely the same, which provides a direct approach to the aforementioned result of Grigorchuk -- Leemann -- Nagnibeda \cite{Grigorchuk-Leemann-Nagnibeda16, Leemann16} on the identification of the Benjamini -- Schramm limits of de Bruijn graphs with Diestel -- Leader graphs. This idea can actually be used for the identification of the Benjamini -- Schramm limits for much more general sequences of circular slider graphs \cite{Kaimanovich-Leemann-Nagnibeda}.

\medskip

\noindent
$\bullet$\;
Alternatively, one can directly consider the \textit{stochastic homogenization} for linear slider graphs by looking for equivalence relations with an invariant probability measure graphed by linear slider graphs (cf. \cite{Kaimanovich03a}). Let us remind that two strings $\ab,\bb\in\A^\Z$ are called \textsf{(asynchronously) asymptotically equivalent} if there exist $\D_-,\D_+\in\Z$ such that the strings $\Sb^{\D_-}\ab$ (resp., $\Sb^{\D_+}\ab$) and $\bb$ are cofinal at $-\infty$ (resp., at $+\infty$).
If, additionally, $\D_-=\D_+$ (resp., $\D_-=\D_+=0$), then the strings $\ab$ and $\bb$ are called \textsf{semi-synchronously} (resp., \textsf{synchronously}) \textsf{asymptotically equivalent}. The synchronous asymptotic equivalence relation is also known as the \textsf{homoclinic} or \textsf{Gibbs equivalence relation} of the shift transformation~$\Sb$, and it preserves the maximal entropy invariant measure for any subshift of finite type on the alphabet $\A$ \cite{Petersen-Schmidt97}. Since the semi-synchronous asymptotic equivalence relation is the common refinement of the synchronous one and of the orbit equivalence relation of the shift $\Sb$, it also preserves these measures. The connected components of linear slider graphs are clearly contained in the equivalence classes of the semi-synchronous equivalence relation, which provides a natural stochastic homogenization for the linear slider graphs induced on subshifts of finite type in $\A^\Z$.

\subsection{Paper overview}

We begin with a brief discussion of the rich history of de Bruijn graphs (\secref{sec:dB}). In \secref{sec:circular} we introduce the notion of a circular slider graph, after which in \secref{sec:circularexamples} we review the known examples of subgraphs of de Bruijn graphs from this point of view. In \secref{sec:periodic} we define periodic slider graphs and establish their $d$-connectedness for arbitrary irreducible sofic subshifts (\thmref{thm:connect}). In \secref{sec:transMark} we introduce and discuss transversally Markov circular slider graphs. In \secref{sec:lamplighter} we remind the basic definitions concerning the lamplighter groups, after which in \secref{sec:lampslider} we prove that de Bruijn digraphs and, more generally, Cayley circular slider digraphs can be interpreted as Schreier digraphs of circular lamplighter groups for a suitable choice of generating subsets (\thmref{thm:sch} and \thmref{thm:schC}). Finally, in \secref{sec:spiderslider} we introduce spider slider graphs and identify a certain class of them with the appropriate Cayley graphs of circular lamplighter groups (\thmref{thm:spider}).

\subsection*{Acknowledgements} I would like to thank Reinhard Diestel, Rostislav Grigorchuk, Paul-Henry Leemann and Tatiana Nagnibeda for helpful and inspiring discussions. I am also grateful to Paul-Henry Leemann for valuable comments on a preliminary version of this paper. Last but not least, my thanks go to Florian Sobieczky, the editor of this volume, for all his effort and patience.

\section{De Bruijn graphs} \label{sec:dB}

Let
$$
\A^n = \{ \al_1 \al_2 \dots \al_n : \al_i\in \A \}
$$
denote the \textsf{set of words}
$$
\ab=\al_1 \al_2 \dots \al_n
$$
of \textsf{length} $n$ in a \textsf{finite alphabet} $\A$.

\begin{dfn}
Two words $\ab,\ab'\in \A^n$ are linked with a \textsf{de Bruijn transition}
\begin{equation} \label{eq:dBtrans}
\ab\mapstoto \ab'
\end{equation}
if there exists a word $\wt\ab\in\A^{n+1}$ such that $\ab$ and $\ab'$ are its initial and final segments, respectively:
$$
\wt\ab = \lefteqn{\overbracket{\phantom{\al_1 \al_2 \dots \al_{d-1} \al_n\,}}^\ab} \al_1\underbracket{\al_2 \dots \al_{n-1} \al_n \, \al_{n+1}}_{\ab'} \;.
$$
\end{dfn}

\begin{dfn}[e.g., see \cite{Allouche-Shallit03,Rigo14}] \label{dfn:rauzy}
The \textsf{span $n$ de Bruijn digraph}
$$
\ora\B_{\ii\A}^n=\ora\B_{\ii m}^n
$$
over an alphabet~$\A$ of cardinality $|\A|=m$ is the directed graph \emph{(digraph)} whose vertex set is $\A^n$, and whose arrows are de Bruijn transitions \eqref{eq:dBtrans}. Sometimes one forgets the orientation on edges and considers \textsf{undirected de Bruijn graphs} $\B_{\ii\A}^n$ as well.
\end{dfn}

These graphs are called after Nicolaas Govert de Bruijn who in 1946 constructed \cite{deBruijn46} what is now known as a \emph{de Bruijn binary sequence} (the formal definition of which is contained in the title of \cite{deBruijn75}), i.e., a \emph{Hamiltonian path} in $\ora\B_{2}^n$. Actually, as it was later discovered by Richard Stanley, in the binary case this had already been done as early as in 1894 (the problem was formulated by de Rivi\`ere and solved by Flye Sainte-Marie), and the case of an arbitrary alphabet had also been treated before de Bruijn in 1934 by Martin (the general case was also independently considered in 1946 by Good with a follow-up by Rees). Still, the current generally accepted term for these graphs is \emph{de Bruijn graphs}. It appears to be quite fair as it was de Bruijn himself who in 1975 wrote a very detailed historical note \cite{deBruijn75}, in which all these works and several other related ones were discussed. This note was eloquently and explicitly called
\begin{quotation}
\textit{Acknowledgement of priority to {C}. {F}lye {S}ainte-{M}arie on the counting of circular arrangements of $2^n$ zeros and ones that show each $n$-letter word exactly once},
\end{quotation}
an \textit{avis rarissima} by today's standards. By the way (the fact which is not mentioned in \cite{deBruijn75}), de Bruijn binary sequence was also apparently independently introduced in the famous \emph{The Logic of Scientific Discovery} by Karl Popper \cite[Appendix iv]{Popper59} under the name of a \emph{shortest random-like sequence}.

De Bruijn graphs are currently quite popular in computer science and bioinformatics, e.g., see \cite{BangJensen-Gutin09}, \cite{Gross-Yellen-Zhang14}.

\section{Circular slider graphs} \label{sec:circular}

We shall now somewhat change the viewpoint and pass from the usual ``linear'' words to the ones written
around a circle (and read clockwise). The position of the initial letter of a word will be marked with a pointer separating the final and the initial letters. The circle itself is allowed to rotate freely, so that the whole word is entirely determined just by the mutual positions of the letters and of the pointer, and therefore there is a natural one-to-one correspondence between \textsf{linear words} and \textsf{pointed circular words} of the same length, see \figref{fig:markerpointed}.

\begin{figure}[h]
\begin{center}
\hfil
\raisebox{15mm}{$\al_1 \al_2 \dots \al_n$}
\hfil
\raisebox{15mm}{\text{\Large$\rightleftarrows$}}
\psfrag{a}[cl][cl]{{\rotatebox[origin=c]{60}{$\!\!\!\!\!\al_{n-1}$}}}
\psfrag{b}[cl][cl]{{\rotatebox[origin=c]{30}{$\al_n$}}}
\psfrag{c}[cl][cl]{$\al_1$}
\psfrag{d}[cl][cl]{{\rotatebox[origin=c]{-30}{$\al_2$}}}
\psfrag{e}[cl][cl]{{\rotatebox[origin=c]{-55}{$\al_3$}}}
\psfrag{v}[cl][cl]{{\rotatebox[origin=c]{85}{$\!\!\!\!\cdots$}}}
\psfrag{w}[cl][cl]{{\rotatebox[origin=c]{-85}{$\cdots$}}}
\hfil
\includegraphics[scale=.15]{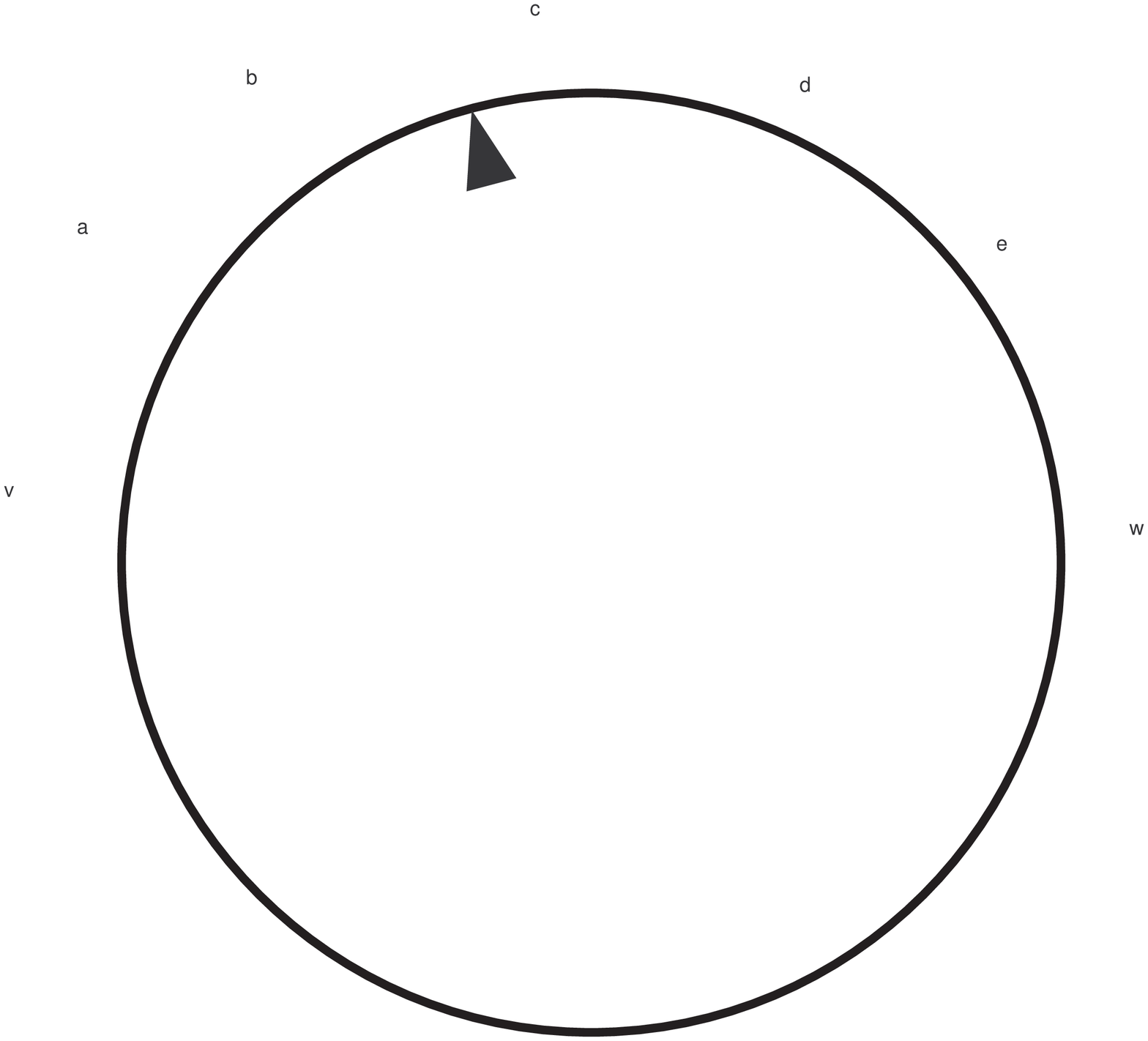}
\hfil\hfil\hskip 1cm
\end{center}
\caption{Linear words and pointed circular words.}
\label{fig:markerpointed}
\end{figure}

This observation is begging to ``change the coordinate system'' and to pass in the definition of de Bruijn transitions \eqref{eq:dBtrans} from \emph{moving the letters with respect to a fixed pointer} in the ``pointer coordinate system'', as on \figref{fig:lettersmove}, to \emph{moving the pointer in the opposite direction with respect to fixed letters} in the ``letters coordinate system'' instead, as on \figref{fig:pointermoves}.

\begin{figure}[h]
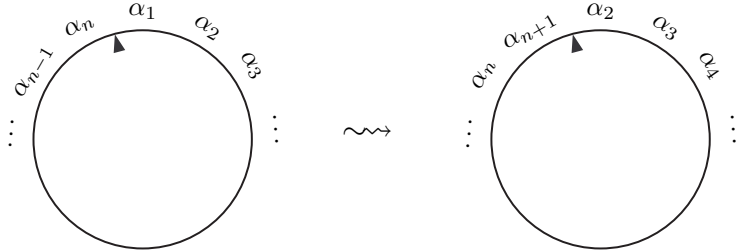

\begin{center}
\psfrag{a}[cl][cl]{{\rotatebox[origin=c]{60}{$\!\!\!\!\!\al_{n-1}$}}}
\psfrag{b}[cl][cl]{{\rotatebox[origin=c]{30}{$\al_n$}}}
\psfrag{c}[cl][cl]{$\al_1$}
\psfrag{d}[cl][cl]{{\rotatebox[origin=c]{-30}{$\al_2$}}}
\psfrag{e}[cl][cl]{{\rotatebox[origin=c]{-55}{$\al_3$}}}
\psfrag{v}[cl][cl]{{\rotatebox[origin=c]{85}{$\!\!\!\!\cdots$}}}
\psfrag{w}[cl][cl]{{\rotatebox[origin=c]{-85}{$\cdots$}}}
\hfil
\includegraphics[scale=.15]{pointerleft.eps}
\hfil
\raisebox{15mm}{\text{\Large$\mapstoto$}}
\hfil
\psfrag{a}[cl][cl]{{\rotatebox[origin=c]{65}{$\!\!\!\!\!\!\al_n$}}}
\psfrag{b}[cl][cl]{{\rotatebox[origin=c]{30}{$\!\!\!\!\!\al_{n+1}$}}}
\psfrag{c}[cl][cl]{$\al_2$}
\psfrag{d}[cl][cl]{{\rotatebox[origin=c]{-30}{$\al_3$}}}
\psfrag{e}[cl][cl]{{\rotatebox[origin=c]{-55}{$\al_4$}}}
\includegraphics[scale=.15]{pointerleft.eps}
\hfil\hfil\hskip 1cm
\end{center}
\caption{De Bruijn transitions with respect to a fixed pointer.}
\label{fig:lettersmove}
\end{figure}

\begin{figure}[h]
\begin{center}
\psfrag{a}[cl][cl]{{\rotatebox[origin=c]{60}{$\!\!\!\!\!\al_{n-1}$}}}
\psfrag{b}[cl][cl]{{\rotatebox[origin=c]{30}{$\al_n$}}}
\psfrag{c}[cl][cl]{$\al_1$}
\psfrag{d}[cl][cl]{{\rotatebox[origin=c]{-30}{$\al_2$}}}
\psfrag{e}[cl][cl]{{\rotatebox[origin=c]{-55}{$\al_3$}}}
\psfrag{v}[cl][cl]{{\rotatebox[origin=c]{85}{$\!\!\!\!\cdots$}}}
\psfrag{w}[cl][cl]{{\rotatebox[origin=c]{-85}{$\cdots$}}}
\hfil
\includegraphics[scale=.15]{pointerleft.eps}
\hfil
\raisebox{15mm}{\text{\Large$\mapstoto$}}
\hfil
\psfrag{c}[cl][cl]{$\!\!\!\al_{n+1}$}
\includegraphics[scale=.15]{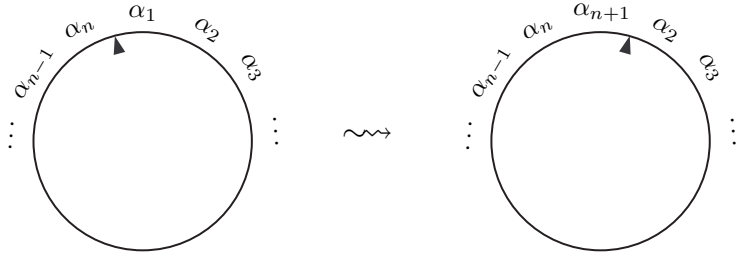}
\hfil\hfil\hskip 1cm
\end{center}
\caption{De Bruijn transitions with respect to a moving pointer.}
\label{fig:pointermoves}
\end{figure}

Then in terms of pointed circular words de Bruijn transitions consist in moving the pointer one position clockwise and (possibly) changing the letter located between the old and the new positions of the pointer. Actually, we find it more convenient to think, instead of a pointer, about a sliding window \textsf{(slider)} of width~2 which covers one letter on either side of the pointer, see \figref{fig:sliderpointed}.

\begin{figure}[h]
\begin{center}
\raisebox{15mm}{$\al_1 \al_2 \dots \al_n$}
\hfil
\raisebox{15mm}{\text{\Large$\rightleftarrows$}}
\psfrag{a}[cl][cl]{{\rotatebox[origin=c]{60}{$\!\!\!\!\!\al_{n-1}$}}}
\psfrag{b}[cl][cl]{{\rotatebox[origin=c]{30}{$\al_n$}}}
\psfrag{c}[cl][cl]{$\al_1$}
\psfrag{d}[cl][cl]{{\rotatebox[origin=c]{-30}{$\al_2$}}}
\psfrag{e}[cl][cl]{{\rotatebox[origin=c]{-55}{$\al_3$}}}
\psfrag{v}[cl][cl]{{\rotatebox[origin=c]{85}{$\!\!\!\!\cdots$}}}
\psfrag{w}[cl][cl]{{\rotatebox[origin=c]{-85}{$\cdots$}}}
\hfil
\includegraphics[scale=.15]{pointerleft.eps}
\hfil
\raisebox{15mm}{\text{\Large$\rightleftarrows$}}
\psfrag{a}[cl][cl]{{\rotatebox[origin=c]{60}{$\!\!\!\!\!\al_{n-1}$}}}
\psfrag{b}[cl][cl]{{\rotatebox[origin=c]{30}{$\al_n$}}}
\psfrag{c}[cl][cl]{$\al_1$}
\psfrag{d}[cl][cl]{{\rotatebox[origin=c]{-30}{$\al_2$}}}
\psfrag{e}[cl][cl]{{\rotatebox[origin=c]{-55}{$\al_3$}}}
\psfrag{v}[cl][cl]{{\rotatebox[origin=c]{85}{$\!\!\!\!\cdots$}}}
\psfrag{w}[cl][cl]{{\rotatebox[origin=c]{-85}{$\cdots$}}}
\hfil
\includegraphics[scale=.15]{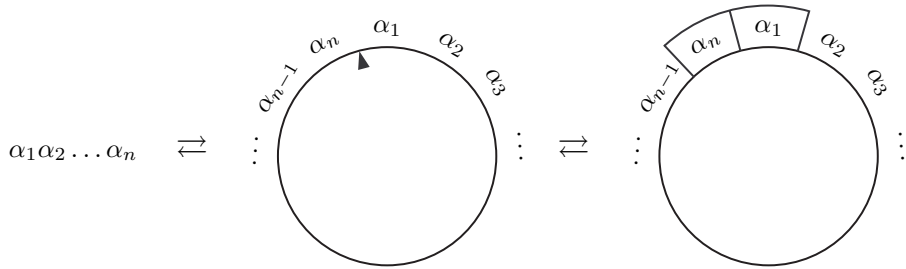}
\end{center}
\caption{Slider pointed circular words.}
\label{fig:sliderpointed}
\end{figure}

In this terminology de Bruijn transitions \eqref{eq:dBtrans} amount to moving the slider one position clockwise and (possibly) replacing the letter at the intersection of the old and the new slider windows (see \figref{fig:slidermoves}), and \dfnref{dfn:rauzy} of de Bruijn graphs $\ora\B_{\ii\A}^n=\ora\B_{\ii|\A|}^n$ can be then recast as

\begin{dfn} \label{dfn:slider}
(1) The \textsf{full circular slider graph of span~$n$ over an alphabet~$\A$} is the digraph whose vertices are all slider pointed circular words of length~$n$ in alphabet $\A$, and whose directed edges (arrows) are the de Bruijn transitions described on \figref{fig:slidermoves}.

(2) A (directed) \textsf{span $n$ circular slider graph} over an alphabet $\A$ is a subgraph of the full circular slider graph, i.e., its set of vertices is a subset of the set~$\A^n$ of the words of length~$n$, and its set of directed edges (arrows) is a subset of the set of de Bruijn transitions.

(3) For a subset $\V\subset\A^n$ we shall denote by $\ora\Ss[\V]$ the corresponding \textsf{induced circular slider graph}, i.e., the associated \emph{induced subgraph} of $\ora\B_{\ii\A}^n$ (its set of vertices is~$\V$, and its set of arrows consists of all de Bruijn transitions between these vertices).
\end{dfn}

\begin{figure}[h]
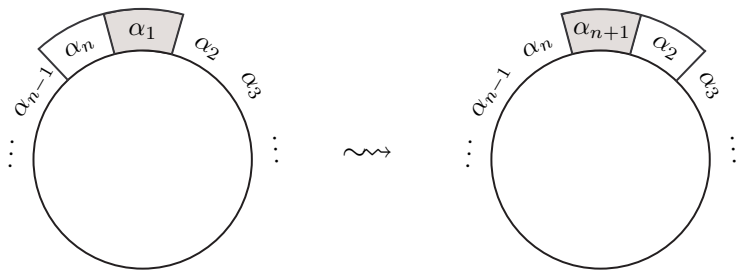

\begin{center}
\psfrag{a}[cl][cl]{{\rotatebox[origin=c]{60}{$\!\!\!\!\!\al_{n-1}$}}}
\psfrag{b}[cl][cl]{{\rotatebox[origin=c]{30}{$\al_n$}}}
\psfrag{c}[cl][cl]{$\al_1$}
\psfrag{d}[cl][cl]{{\rotatebox[origin=c]{-30}{$\al_2$}}}
\psfrag{e}[cl][cl]{{\rotatebox[origin=c]{-55}{$\al_3$}}}
\psfrag{v}[cl][cl]{{\rotatebox[origin=c]{85}{$\!\!\!\!\cdots$}}}
\psfrag{w}[cl][cl]{{\rotatebox[origin=c]{-85}{$\cdots$}}}
\hfil
\includegraphics[scale=.15]{sliderleft1.eps}
\hfil
\raisebox{15mm}{\text{\Large$\mapstoto$}}
\hfil
\psfrag{c}[cl][cl]{$\!\!\!\al_{n+1}$}
\includegraphics[scale=.15]{sliderright1.eps}
\hfil\hfil\hskip 1cm
\end{center}
\caption{De Bruijn transitions with respect to a moving slider.}
\label{fig:slidermoves}
\end{figure}

In the same way one can define \textsf{undirected circular slider graphs} as well. Note that \emph{a priori} we impose no \emph{connectedness conditions}. Although in the literature de Bruijn graphs and their subgraphs have so far always been considered over finite alphabets only, we do not impose any \emph{finiteness conditions} on the alphabet $\A$ either. In fact, there are meaningful examples of locally finite circular slider graphs over infinite alphabets as well (e.g., the \emph{Cayley} and \emph{Schreier circular slider graphs} introduced in \dfnref{dfn:slidergroup}).

\begin{rem}
Of course, formally the circular slider graphs from \dfnref{dfn:slider} are just subgraphs of the corresponding full de Bruijn graphs $\ora\B_{\ii\A}^n$, and numerous examples of this kind have been considered before (see \secref{sec:circularexamples} below). However, our point of view puts this notion in a new perspective as outlined in \secref{sec:perspective} of the Introduction.
\end{rem}

\begin{rem}
Our inspiration for choosing the term ``slider'' comes from the analogy with an old-fashioned mechanical analogue computer known as \textit{slide rule}, which was equipped with a sliding window split into two equal halves by a cursor line, see \figref{fig:rule} (this window was actually called ``runner'' though). It is interesting that there existed both linear and circular slide rules (cf. the comparison of the notions of linear and circular slider graphs in \secref{sec:perspective} of the Introduction)!
\end{rem}

\begin{figure}[h]
\begin{center}
\includegraphics[scale=.4]{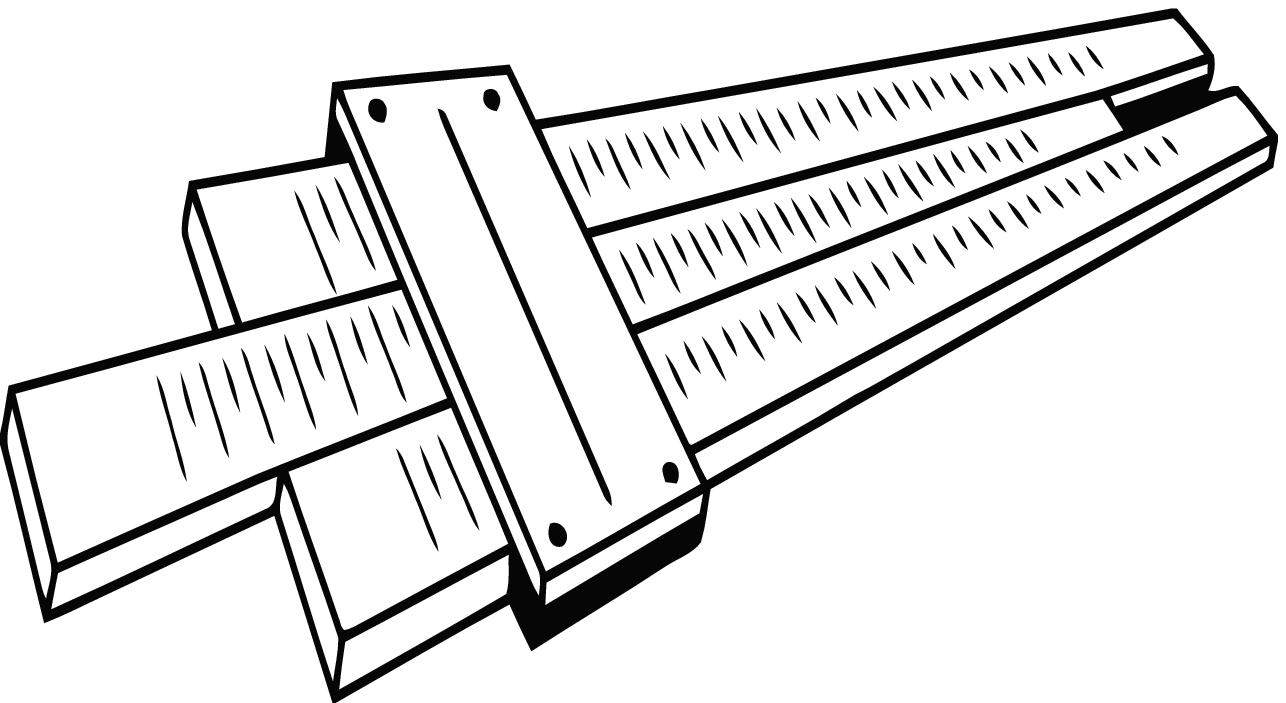} \hfil\hfil \includegraphics[scale=.15]{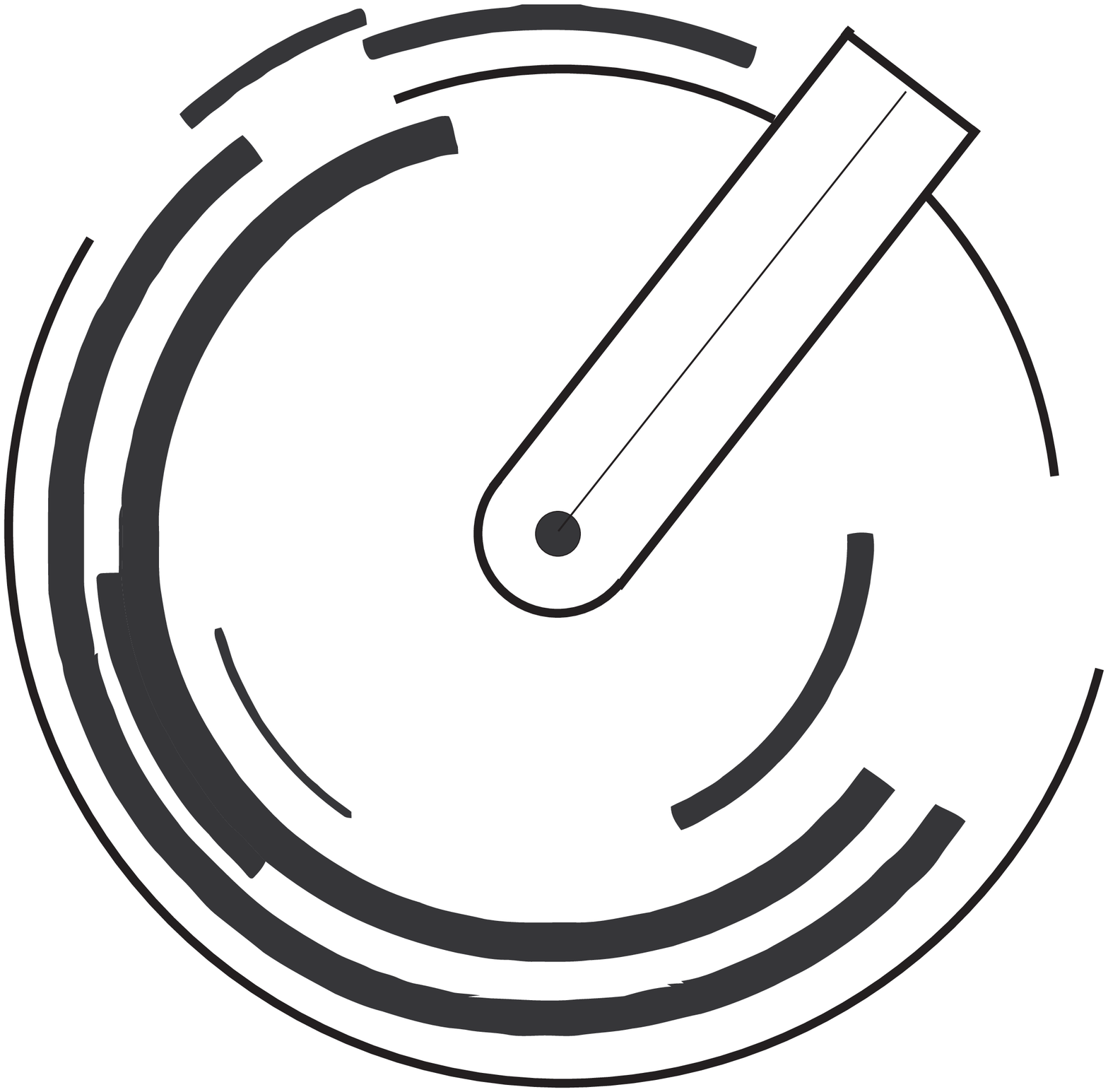} \hfil\hfil
\end{center}
\caption{Linear and circular slide rules.}
\label{fig:rule}
\end{figure}

\section{Examples} \label{sec:circularexamples}

General subgraphs of the de Bruijn graph $\ora\B_{\ii\A}^n$ (i.e., circular slider graphs in the sense of \dfnref{dfn:slider}) are parameterized by subsets $\E\subset\A^{n+1}$. The set of vertices of the associated subgraph is the collection of the length $n$ initial and final segments of the words from $\E$, and any $\ww\in\E$ produces an arrow between its length~$n$ initial and final segments \cite{Moreno2005a}.

Most examples of the subgraphs of de Bruijn graphs considered in the literature are determined by the set
$$
\Lc^{(n+1)}=\A^{n+1}\cap\Lc
$$
of the length $n+1$ words in a certain \textsf{factorial language} $\Lc$, i.e., such language that any \textsf{factor} ($\equiv$ truncation, both on the left and on the right) of a word from $\Lc$ is also in~$\Lc$. The set of vertices is then $\Lc^n$. We shall denote these graphs
$$
\ora\Ss^n(\Lc)
$$
and call them \textsf{factorial slider graphs}.

If the language $\Lc$ is in addition \textsf{prolongable} (i.e., for any $\ww\in\Lc$ there are $\al,\be\in\A$ such that $\al\ww\be\in\Lc$; the languages with this property are also called \textsf{extendable}), then $\Lc$ coincides with the language $\Lc(\Si)$ of the subwords of a certain \textsf{subshift} (closed shift invariant subset)
$$
\Si\subset\A^\Z
$$
(e.g., see \cite{Lind-Marcus95}), and, conversely, the language $\Lc(\Si)$ is obviously factorial and prolongable for any subshift $\Si$. In this situation
$$
\Lc^{(n)}=\Si^{(n)}
$$
is the set of the length $n$ subwords appearing in the infinite words from $\Si$, and we shall use the notation
$$
\ora\Ss^n(\Si) = \ora\Ss^n(\Lc)
$$
for the \textsf{slider graph of span $n$ determined by a subshift} $\Si$.

\begin{ex} \label{ex:DNA}
$\Lc=\Lc(\ww)$ is the language of the factors of a \emph{finite word} $\ww$ in the alphabet~$\A$. In bioinformatics one considers the situation when $\ww$ is a finite (if very long) \emph{DNA ``word''} (called \textsf{reference sequence}), and the associated graphs~$\ora\Ss^n(\Lc)=\ora\Ss^n(\ww)$ are called \textsf{de Bruijn graphs} (along with the usual full de Bruijn graphs $\ora\B_{\ii\A}^n$), see \cite{Compeau-Pevzner-Tesler2011}, \cite{Lin-Pevzner14}. One has also introduced the notion of a \textsf{colored de Bruijn graph} \cite{Iqbal12} to describe the situation when there are several reference sequences.
\end{ex}

\begin{ex} \label{ex:kautz}
$\Lc=\Lc(\Si)$ for the subshift $\Si\subset\A^\Z$ determined by the condition that \emph{any two consecutive letters in the words from $\Si$ are distinct} (equivalently, $\Lc$ is the language of the irreducible words representing the elements of the \emph{free product} of several copies of the cyclic 2-element group $\Z_2$ indexed by the alphabet $\A$). The associated graphs $\ora\Ss^n(\Si)$ are known as the \textsf{Kautz graphs} \cite{Kautz71}. Actually, in the case of a 3-letter alphabet $\A$ these graphs appear already in the original de Bruijn's paper \cite{deBruijn46}.
\end{ex}

\begin{ex} \label{ex:tmc}
$\Lc=\Lc(\Si)$ for the subshift $\Si$ corresponding to a \emph{topological Markov chain} \footnote{\;We distinguish general \textsf{subshifts of finite type} determined by a finite collection of \emph{forbidden words} and \textsf{topological Markov chains,} for which all forbidden words have length 2 (or, equivalently, all two-letter subwords must belong to a fixed set of \emph{admissible transitions}), e.g., see \cite{Lind-Marcus95}.} on the alphabet $\A$. The associated graphs $\ora\Ss^n(\Si)$ were introduced in 1983 by Fiol, Yebra and F\`abrega under the name of \textsf{sequence graphs} \cite{Fiol-Yebra-Fabrega83}, also see \cite{Gomez-Fiol-Yerba92} (however, these papers have remained virtually unknown outside of the Spanish graph theory community). If $\Si$ is the full shift, then, of course, $\ora\Ss^n(\Si)$ is the full de Bruin graph $\ora\B_{\ii\A}^n$.
\end{ex}

\begin{ex} \label{ex:sft}
The graphs $\ora\Ss^n(\Si)$ determined by (the language of) a \emph{general subshift of finite type} $\Si\subset\A^\Z$ were introduced by Moreno \cite{Moreno03} who was apparently not aware of
\cite{Fiol-Yebra-Fabrega83, Gomez-Fiol-Yerba92}.
\end{ex}

\begin{rem}
In \exref{ex:kautz} and \exref{ex:tmc} the graphs $\ora\Ss^n(\Lc)$ coincide with the induced subgraphs
$\ora\Ss\left[\Lc^{(n)}\right]$ determined by the sets of length $n$ words of the language $\Lc$. Provided $n$ is large enough, this is also the case for \exref{ex:tmc}.
\end{rem}

\begin{ex} \label{ex:rauzy}
$\Lc=\Lc(\ww)$ is the language of the factors of a \emph{single semi-infinite word} $\ww\in\A^{\Z_+}$. In this case the associated graphs $\ora\Ss^n(\Lc)$ were introduced by Rauzy \cite{Rauzy83} and are currently known as \textsf{Rauzy graphs}.
\end{ex}

\begin{ex} \label{ex:bosher}
$\Lc=\Lc(\Si)$ for a \emph{minimal subshift} $\Si\subset\A^\Z$ (then by minimality $\Lc=\Lc(\ww_+)$ for the positive subword $\ww_+$ of any $\ww\in\Si$, so that this setup is essentially equivalent to that of Rauzy from \exref{ex:rauzy}). The associated graphs~$\ora\Ss^n(\Si)$ were used by Boshernitzan in his analysis of the unique ergodicity for interval exchange maps \cite[Theorem 6.9]{Boshernitzan85}. Their appearance in \cite{Boshernitzan85} remained unknown outside of a very limited circle of specialists until 2010 when it was emphasized by Ferenczi -- Monteil in \cite{Ferenczi-Monteil10}.
\end{ex}

\begin{ex} \label{ex:fpl}
$\Lc$ is a \emph{general factorial prolongable language} in the alphabet~$\A$ (as we have already explained, this is the same as saying that $\Lc=\Lc(\Si)$ is the language of a \emph{subshift} $\Si\subset\A^\Z$). The associated graphs $\ora\Ss^n(\Si) = \ora\Ss^n(\Lc)$, considered as a generalization of the ones from \exref{ex:rauzy} above, were called \textsf{Rauzy graphs} by Cassaigne \cite{Cassaigne96} who used them for studying linear complexity languages.
\end{ex}

\begin{rem}
\exref{ex:kautz}, \exref{ex:tmc}, \exref{ex:sft}, \exref{ex:bosher} are particular cases of the situation when $\Si$ is a \emph{general subshift over an alphabet} $\A$ which is described in \exref{ex:fpl}. As is typical for this area (cf. de Bruijn's acknowledgement \cite{deBruijn75} and its discussion in \secref{sec:dB}), the authors of \cite{Fiol-Yebra-Fabrega83}, \cite{Rauzy83} and \cite{Boshernitzan85} were apparently unaware of each other's work. One should note, however, that although formally all these papers deal with essentially the same setup of a subshift $\Si\subset\A^\Z$, the focus in \cite{Fiol-Yebra-Fabrega83}, \cite{Moreno03} (\exref{ex:kautz}, \exref{ex:tmc}, \exref{ex:sft}), on one hand, and in \cite{Rauzy83}, \cite{Boshernitzan85}, \cite{Cassaigne96} \linebreak (\exref{ex:rauzy}, \exref{ex:bosher}, \exref{ex:fpl}), on the other hand, is entirely different. It is well-known (e.g., see \cite{Lind-Marcus95}) that the subshifts of finite type considered in \cite{Fiol-Yebra-Fabrega83}, \cite{Moreno03} are in a sense completely opposite to the low complexity subshifts treated in \cite{Rauzy83}, \cite{Boshernitzan85}, \cite{Cassaigne96}. For instance, the former ones have a lot of periodic words, whereas the latter ones have none (cf. \secref{sec:periodic} below).
\end{rem}

\begin{ex} \label{ex:part}
Let $\au$ be a \emph{finite partition} of a state space $X$. Denote by $\A$ the set of elements of $\au$, and let $x\mapsto \au(x)\in\A$ be the map which assigns to any point $x\in X$ its \textsf{$\au$-name}, i.e.,
the element of the partition $\au$ which contains $x$. Then for any transformation $T:X\to X$ the \textsf{symbolic encoding map}
$$
\Sf: x \mapsto \bigl(\au(x),\au(Tx),\au(T^2x),\dots \bigr) \in \A^{\Z_+}
$$
is a semi-conjugacy between the original map $T$ and the shift transformation
$$
\Sb:\A^{\Z_+}\to \A^{\Z_+}
$$
(note that \emph{a priori}, without imposing any additional conditions, the image set $\Sf(X)\subset\A^{\Z_+}$ need not be closed or shift invariant). The map $\Sf$ is a standard tool in the theory of dynamical systems both in the measurable and in the topological setups (for instance, see \cite{Keane91} or \cite{Lind-Marcus95}).

As usual, let $T^{-n}\au$ denote the partition of $X$ into the $T^n$-preimages of the elements of the partition $\au$, so that $x,y\in X$ belong to the same element of $T^{-n}\au$ iff $T^n x$ and $T^n y$ belong to the same element of $\au$, i.e., $\au(T^n x)=\au(T^n y)$, and let
$$
\au_n= \au \vee T^{-1}\au \vee \dots \vee T^{-n+1}\au
$$
be the common refinement of the partitions $T^{-k}\au,\;0\le k\le n-1$. The $\au_n$-names of points $x\in X$ are length $n$ words in the alphabet $\A$, so that the triple $(X,T,\au)$ determines a subgraph $\ora\Ss^n_{\jj \au,T}$ of $\ora\B_{\ii\A}^n$ with the vertex set
$$
\{\au_n(x):x\in X\}\subset \A^n
$$
and the set of arrows
$$
\{\au_n(x)\mapstoto\au_n(Tx): x\in X\}
$$
which can be identified with
$$
\{\au_{n+1}(x):x\in X\}\subset \A^{n+1}
$$
(because $\au_n(x)$ and $\au_n(Tx)$ are the initial and the final segments of $\au_{n+1}(x)$, respectively).

Clearly, $\ora\Ss^n_{\jj \au,T}$ is completely determined just by the action of the shift transformation~$\Sb$ on the image set $\Sf(X)\subset\A^{\Z_+}$ with respect to the time 0 coordinate partition of $\A^{\Z_+}$, so that \exref{ex:fpl} is a particular case of this construction.
\end{ex}

\begin{ex} \label{ex:collatz}
Let
$$
\Cb(x) =
\begin{cases}
x/2 \;,& x\;\text{is even} \;, \\
(3x+1)/2\;,& x\;\text{is odd} \;,
\end{cases}
$$
be the \textsf{Collatz function} on the set of positive integers $\N$. The famous (and still very much open) \textsf{Collatz conjecture} claims that the $\Cb$-orbit of any starting point will eventually reach the number 1 (e.g., see \cite{Lagarias10}). Let $\au$ be the partition of $\N$ into even and odd numbers.
It is easy to see that if $x\equiv y\; (\!\!\!\!\mod 2^n)$, then $\au_n(x)=\au_n(y)$. Moreover, as it has been independently established by Terras \cite[Theorem 1.2]{Terras76} and Everett \cite[Theorem~1]{Everett77} (also see Lagarias \cite[Theorem B]{Lagarias85}), the map $x\mapsto\au_n(x)$ is a surjection, which implies that for any $n>0$ the associated graph $\ora\Ss^n_{\jj \au,\Cb}$ is the full de Bruijn graph $\ora\B_2^n$ \cite{Laarhoven-deWeger13}.
\end{ex}

\section{Periodic slider graphs: connectedness and step $d$ induced graphs} \label{sec:periodic}

All examples of factorial slider graphs described in \secref{sec:circularexamples} are based on using the linear word structure. However, our approach makes natural to consider circular words as well.

\begin{dfn}
For a subshift $\Si\subset\A^\Z$, let
$$
\Pc_n(\Si)\subset\Si
$$
be the set of its $n$-periodic words, and let
$$
\Si^{(n)}_\cc\subset\A^n
$$
be the set of all their length $n$ factors ($\equiv$ \textsf{$\Si$-admissible circular words of length} $n$). The associated induced circular slider graph $\ora\Ss\left[\Si^{(n)}_\cc\right]$ is called the \textsf{$n$-periodic slider graph} determined by the subshift $\Si$.
\end{dfn}

\begin{ex}
If $\Si$ is a topological Markov chain on $\A$, then $\Si^{(n)}_\cc$ consists of all words $\ab=\al_1 \al_2 \dots \al_n$ such that, in addition to all transitions $\al_i\al_{i+1},\; 1\le i\le n-1$, the transition $\al_n\al_1$ is also admissible.
\end{ex}

The periodic slider graphs corresponding to the Kautz topological Markov chain (no double letters, see \exref{ex:kautz} above) have been recently introduced by B\"ohmov\'a, Dalf\'o and Huemer \cite{Bohmova-Dalfo-Huemer15} and further studied by Dalf\'o \cite{Dalfo17a,Dalfo17}. We are not aware of any considerations of the periodic slider graphs for any other topological Markov chains or subshifts of finite type. All these subshifts have a lot of periodic words (as well as more general \emph{sofic subshifts} \cite{Lind-Marcus95} or \emph{subshifts of quasi-finite type} \cite{Buzzi05}), and the class of the associated periodic slider graphs should be quite interesting and promising for a future study.

The question about \emph{connectedness} does not really arise for the factorial slider graphs considered in \secref{sec:circularexamples}, as under the standard assumptions all these graphs are \textsf{strongly connected} in the sense that for any two vertices $x,y$ there exists a directed path from $x$ to $y$ (except for, possibly, \exref{ex:DNA} and \exref{ex:rauzy} of the languages $\Lc(\ww)$ generated by, respectively, a single finite or a single semi-infinite word~$\ww$). Indeed, in the case of \emph{sofic subshifts} (in particular, of topological Markov chains or subshifts of finite type) it follows from the usual \textsf{irreducibility} assumption (which consists in requiring that for any two words $\uu,\vv$ from the subshift language~$\Lc$ there is a word $\ww$ such that $\uu\ww\vv\in\Lc$, which is precisely what is needed for the strong connectivity of the associated slider graph). For the \emph{low complexity shifts} irreducibility follows from minimality, which is also a standard condition in this setup.

However, already for the periodic slider graphs (let alone the induced slider graphs determined by a general subset $\V\subset\A^n$) the situation is different, and these graphs need not be even \textsf{weakly connected} (i.e., as undirected graphs) in the simplest situations, for instance, for irreducible aperiodic topological Markov chains.

\begin{ex}
Let $\Si$ be the topological Markov chain on the 3-letter alphabet
$$
\A=\{\al,\be,\g\}
$$
with the admissible transitions
$$
\al \mapstoto \al \mapstoto \be \mapstoto \g \mapstoto \al \;.
$$
This chain has the property that its position at any moment of time $t$ is uniquely determined by the positions at times $t-1$ and $t+1$, i.e., for any two letters $\z_1,\z_2\in\A$ there exists at most one letter $\z\in\A$ which can be inserted between $\z_1$ and $\z_2$ in such a way that the word $\z_1\z\z_2$ is admissible. Therefore, in this situation the admissible de Bruijn transitions (\figref{fig:slidermoves}) on $\Si^{(n)}_\cc$ consist just in moving the slider along circular words without replacing any letters, so that the connected components of the undirected $n$-periodic slider graph $\ora\Ss\left[\Si^{(n)}_\cc\right]$ are just the rotation orbits (consisting of cyclic permutations) of $n$-periodic words. For instance, for $n=3$ there are just two periodic words $\al\al\al$ and $\al\be\g$ (up to a rotation $\equiv$ cyclic permutation), so that the 3-periodic slider graph has 4 vertices
$$
\al\al\al\;,\quad\al\be\g\;,\quad\be\g\al\;,\quad \g\al\be
$$
with the arrows
$$
\al\be\g \mapstoto \be\g\al \mapstoto \g\al\be \mapstoto \al\be\g \;, \quad \al\al\al \mapstoto \al\al\al \;,
$$
and it has two (weakly) connected components $\{\al\al\al\}$ and $\{\al\be\g,\be\g\al,\g\al\be\}$.
\end{ex}

However, one can easily modify \dfnref{dfn:slider} to make the periodic slider graphs connected by allowing slider transitions of uniformly bounded ``step length''. In spite of its naturalness, we could not find the following definition in the literature.

\begin{dfn}
Let $\G$ be a directed graph with the vertex set $V$, and let $X\subset V$. Fix a positive integer $d$.
Let us connect two vertices $x,y\in X$ with a directed arrow if there exists a path of length $\le d$ joining $x$ and $y$ in the ambient graph $\G$. The resulting digraph with the vertex set $X$ will be called the \textsf{step $d$} \textsf{graph on $X$ induced from $\G$} (or, \textsf{step $d$} \textsf{induced graph} in short). In the same way one defines \textsf{step} $d$ \textsf{undirected induced graphs} in the situation when the ambient graph is undirected. One can also talk about \textsf{strictly step} $d$ \textsf{induced graphs} when only the paths of length precisely $d$ in the ambient graph are considered. If the step $d$ induced graph on $X$ is connected (strongly or weakly), then we shall say that the set $X$ is \textsf{step $d$ connected} (resp., strongly or weakly) in the ambient graph~$\G$.
\end{dfn}

\begin{thm} \label{thm:connect}
If $\Si\subset\A^\Z$ is an irreducible sofic subshift on a finite alphabet~$\A$, then there exists an integer $d$ such that all periodic slider graphs $\ora\Ss\left[\Si^{(n)}_\cc\right]$ are step~$d$ connected in the corresponding ambient graphs $\ora\Ss\left[\Si^{(n)}\right]$.
\end{thm}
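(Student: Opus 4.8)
The plan is to first recast step-$d$ connectivity in a usable form. Any directed path of length $k\le n$ issued from a circular word $\ab=\al_1\al_2\dots\al_n$ in $\ora\Ss\left[\Si^{(n)}\right]$ ends at $\al_{k+1}\dots\al_n\,\be_1\dots\be_k$, and such a sequence of de Bruijn transitions \emph{is} a path in $\ora\Ss\left[\Si^{(n)}\right]$ as soon as $\al_1\dots\al_n\be_1\dots\be_k$ is $\Si$-admissible (its length-$n$ factors then all lie in $\Si^{(n)}$ and consecutive ones differ by a de Bruijn transition). Two consequences: taking $(\be_1,\dots,\be_k)=(\al_1,\dots,\al_k)$ shows $\ab\mapstoto\Sb\ab$ is an arrow of $\ora\Ss\left[\Si^{(n)}_\cc\right]$ itself, so each rotation orbit of a circular word spans a directed cycle of $\ora\Ss\left[\Si^{(n)}_\cc\right]$ and is step-$d$ connected for every $d\ge1$; and if $\bb\in\Si^{(n)}_\cc$ is obtained from $\ab$ by replacing a factor of bounded length $L$ by another word of length $L$ in such a way that the new circular word is again in $\Si^{(n)}_\cc$ and the splices with the neighbouring letters stay admissible, then $\ab$ reaches a rotation of $\bb$ along a directed path of length $L$ and so $\ab,\bb$ are step-$L$ connected. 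Hence it suffices to find a constant $L=L(\Si)$ such that any two words of $\Si^{(n)}_\cc$ are joined by a finite (possibly very long) chain of such ``local surgeries'' of width $\le L$.

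Before the core step I would dispose of two reductions. If $\Si$ has period $P$ then $\Si^{(n)}_\cc=\vn$ unless $P\mid n$, and for $P\mid n$ one uses the standard cyclic decomposition $\Si=\Si_0\sqcup\dots\sqcup\Si_{P-1}$, with $\Sb^P$ aperiodic and irreducible sofic on each piece, to transport surgeries on $\Si^{(n)}_\cc$ to surgeries on the associated periodic circular words of that piece; so I may assume $\Si$ aperiodic. Also $\ora\Ss\left[\Si^{(n)}\right]$ is strongly connected for every $n$ (irreducibility of $\Si$ gives a uniformly bounded bridging word between any two vertices, so its diameter is $\le n+\mathrm{const}$), so enlarging $d$ to absorb the finitely many small $n$ costs nothing; thus I may also assume $n$ large.

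The core step I would carry out inside a fixed finite right-resolving (e.g.\ Fischer) presentation $G$ of $\Si$, an irreducible labelled digraph: by irreducibility and aperiodicity there is a constant $L_0$ so that between any two vertices of $G$, and through any prescribed vertex, there is a path of every length $\ge L_0$. A circular word $\ww$ of length $n$ lies in $\Si^{(n)}_\cc$ iff some power $\ww^{p}$, $p\le|G|$, labels a loop of $G$ (a bi-infinite path reading $\ww^\infty$ forces a periodic orbit of the partial ``read $\ww$'' map on the vertex set). First I would show that, by rerouting one block of length $O(L_0)$ flanked by a fixed copy of that loop, any $\ww\in\Si^{(n)}_\cc$ moves in one bounded hop to a circular word that labels a loop of $G$. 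Then, fixing once and for all a vertex $v_0$ and for every large $n$ a loop $\Lambda_n$ of length $n$ through $v_0$ with label $c_n$, I would take a looping circular word, (i) reroute a bounded initial block of its loop so that it passes through $v_0$ (changing only a factor of length $O(L_0)$ and staying a loop), and then, after rotating so the loop is based at $v_0$, (ii) match $c_n$ block by block: if the current loop agrees with $\Lambda_n$ for the first $t$ steps, replace the next $O(L_0)$ steps by a segment copying $\Lambda_n$ for $\Theta(L_0)$ more steps and then returning, via a path of $G$ of controlled length, to the state the old loop had reached — this extends the agreement by a fixed positive amount, keeps the word a loop of length $n$ through $v_0$, and after $O(n/L_0)$ surgeries yields $c_n$. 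Each surgery is, by the first paragraph, a bounded-length path of $\ora\Ss\left[\Si^{(n)}\right]$ between words of $\Si^{(n)}_\cc$ (all the rerouting walks exist precisely by the length-$L_0$ reachability in $G$), so every $\ww\in\Si^{(n)}_\cc$ is step-$d$ connected to the single word $c_n$ with $d$ depending only on $G$, hence only on $\Si$, and the theorem follows.

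The step I expect to be the main obstacle is the bookkeeping in (ii): carrying out ``match $c_n$ one bounded block at a time'' \emph{uniformly in $n$}, i.e.\ guaranteeing that every individual reroute has width $\le L$ and leaves the intermediate word both $\Si$-admissible and a genuine loop of $G$ of the \emph{same} length $n$, while getting the splices at the two ends of each modified block right; the mod-$P$ arithmetic of the periodic reduction and the handling of the ``exceptional'' elements of $\Si^{(n)}_\cc$ that are not themselves loop labels of $G$ add further technical overhead.
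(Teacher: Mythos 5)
Your plan is workable, but it takes a genuinely different and substantially heavier route than the paper's proof. The paper first recodes: sofic shifts are factors of shifts of finite type, and a higher-block recoding reduces an SFT to a topological Markov chain, so one may assume $\Si$ is an irreducible aperiodic Markov chain on $\A$. Aperiodicity then yields a single mixing constant $\ka$ such that for any $\al,\be\in\A$ there is $\uu\in\A^\ka$ with $\al\uu\be$ admissible, and the whole argument collapses to one kind of surgery: in any admissible circular word, any window of length $2\ka+1$ can be rewritten as $\uu\al\vv$ with a prescribed middle letter $\al$ while preserving circular admissibility, and such a rewrite is a step-$(2\ka+2)$ hop in $\ora\Ss[\Si^{(n)}]$. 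To join $\ab$ to $\bb$ one plants the letters of $\bb$ at a $(2\ka+1)$-net of positions one surgery at a time, reaching a word $\cb$ agreeing with $\bb$ on the net, and then rewrites the remaining gaps, which are bounded and separated by points of agreement; there is no canonical target word, no presentation graph, and no loop bookkeeping. Your version instead works in the Fischer cover, characterizes $\Si^{(n)}_\cc$ via loop labels, and morphs every word onto a canonical loop label $c_n$, which is exactly what forces on you the uniform-in-$n$ rerouting bookkeeping you flag as the main obstacle; note also that connecting $\ab$ to an arbitrary given $\bb$ by local surgeries, as the paper does, would let you drop both the canonical target and the delicate closing-up step at the end of your phase (ii). What your route buys is a more honest treatment of the sofic case: the reduction to SFTs is itself slightly glib, since an $n$-periodic point of a sofic shift need not lift to an $n$-periodic point of the covering SFT (only a multiple of the period lifts), so $\Si^{(n)}_\cc$ can be a proper superset of the image of the corresponding set upstairs, and your $p\le|G|$ loop characterization together with your step (a) is precisely what bridges that. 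One point where real care is needed in your scheme: to certify that a splice $\ww$ followed by the new block is a path in the ambient graph you must exhibit the concatenated word as the label of a single path of $G$, so the presenting path of the rotated old word has to be chosen (going around the long loop labelled by $\ww^p$) so that it terminates exactly where your rerouting path begins; this is where right-resolving does real work and where a careless choice of presenting path makes the splice appear inadmissible.
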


\begin{proof}
Since sofic subshifts are factors of finite type subshifts, it is sufficient to establish the claim just for the latter ones, so that by replacing, if necessary, the alphabet $\A$ with the alphabet $\A'=\A^N$ of the length~$N$ words for a certain integer $N$, the general case reduces in the usual way to the situation when $\Si$ is an irreducible topological Markov chain on $\A$ (cf. \cite{Lind-Marcus95}). For simplicity we shall also assume that this chain is aperiodic (otherwise one would have to add a couple of usual pretty obvious technicalities). Then the entries of a certain power of the admissibility matrix of $\Si$ are all positive, i.e., there exists a positive integer $\ka$ with the property that for any $\al,\be\in\A$ there is $\uu\in\A^\ka$ such that $\al\uu\be\in\Lc(\Si)$.

By the above, for any admissible circular word $\ab\in\Si^{(n)}_\cc$ and any letter $\al\in\A$, any  contiguous length $2\ka+1$ segment in $\ab$ can be replaced with a same length segment $\uu\al\vv,\, \uu,\vv\in\A^\ka,$ with the letter $\al\in\A$ in the middle, in such a way that the resulting new circular word $\ab'$ is also admissible. Then the set $\{\ab,\ab'\}$ is obviously step $(2\ka+2)$ connected in $\ora\Ss\left[\Si^{(n)}\right]$.

If $\bb\in\Si^{(n)}_\cc$ is another admissible circular word, then the above argument shows that $\ab$ is step $(2\ka+2)$ connected with an admissible circular word $\cb\in\Si^{(n)}_\cc$ which coincides with $\bb$ on a certain subset of positions $Z$ of the cycle $\Z_n$ such that the distance between any two neighbours $z,z'\in Z$ does not exceed $2\ka+1$. Since $\cb$ and $\bb$ are step $(2\ka+2)$ connected in $\ora\Ss\left[\Si^{(n)}\right]$, the claim follows with $d=2\ka+2$.
\end{proof}

\section{Missing links and transversally Markov circular slider graphs} \label{sec:transMark}

In spite of a number of studies of the \emph{fault tolerance} of de Bruijn and Kautz graphs to edge failures (e.g., see \cite{Ryu-Noel-Tang12}, \cite{Lin-Zhou-Li17} and the references therein), circular slider graphs other than the ones determined by various factorial languages (see \secref{sec:circularexamples}) have not attracted much attention \emph{per se}.

We shall give here the definition of a natural class of circular slider graphs, which is in an essential way based on using the circular word structure and can not be described by any factorial language.
As far as we know, this notion has not appeared in the literature so far.

\begin{dfn} \label{dfn:top}
Let $\Si$ be a topological Markov chain over a finite alphabet $\A$ determined by its set of admissible transitions $A=\Si^{(2)}\subset\A^2$. The associated \textsf{transversally Markov circular slider graph}
$$
\ora\Ss_{\ii\Si}(\A^n)
$$
of span $n$ is the circular slider graph with the vertex set $\A^n$, whose arrows are the de Bruijn transitions described on \figref{fig:slidermoves}, with the additional condition that $\al_1\al_{n+1}\in A$. In other words, one retains only the \textsf{transversally Markov de Bruijn transitions}, i.e., those for which the replacement $\al_1 \mapstoto \al_{n+1}$ is $\Si$-admissible. Moreover, given a subset $\V\subset\A^n$, one can further consider the \textsf{induced transversally Markov circular slider graph} $\ora\Ss_{\ii\Si}[\V]$, whose vertex set is $\V$, and whose arrows are the transversally Markov de Bruijn transitions between the words from $\V$.
\end{dfn}

More generally, one can also impose more complicated rules for admissibility of de Bruijn transitions, for instance, it can depend not just on the values of $\al_1$ and $\al_{n+1}$, but also on the letters in a certain fixed neighbourhood of the slider. Ultimately, when this neighbourhood becomes the whole circular word, one arrives at the definition of a general circular slider graph.

\begin{ex}
The \textsf{golden mean subshift} $\Si=\Si(A)$ is the topological Markov chain over the binary alphabet $\A=\{0,1\}$ with the forbidden transition $1\mapstoto 1$, i.e., with the set of admissible transitions $A=\{00,01,10\}$ (e.g., see \cite{Lind-Marcus95}). On the left of \figref{fig:golden} is the full slider graph $\ora\B_2^3$ of span 3 over the alphabet $\A=\{0,1\}$. Its vertices are all 3-letter words $\al_1\al_2\al_3$ in the alphabet $\A$, and its arrows are all de Bruijn transitions $\al_1\al_2\al_3\mapstoto\al_2\al_3\al_4$ labelled with the corresponding replacement letters $\al_4$. On the right of \figref{fig:golden} is the transversally Markov slider graph $\ora\Ss_{\ii\Si}\left(\A^3\right)$ obtained by removing the de Bruijn transitions $\al_1\al_2\al_3\mapstoto\al_2\al_3\al_4$ with $\al_1=\al_4=1$.

The induced subgraphs $\ora\Ss_{\ii\Si}\left[\Si^{(3)}\right]$ and $\ora\Ss_{\ii\Si}\left[\Si^{(3)}_\cc\right]$ of $\ora\Ss_{\ii\Si}\left(\A^3\right)$ determined by the subsets
$$
\Si^{(3)}= \{000, 001, 010, 100, 101\}
$$
and
$$
\Si^{(3)}_\cc = \{000, 001, 010, 100\}
$$
of all $\Si$-admissible and of $\Si$-admissible circular words, respectively, are presented on \figref{fig:golden2} \footnote{\;Of course, in general there is no need to use the same topological Markov chain both for defining a transversally Markov circular slider graph and for defining its induced subgraphs. However, there is not much choice in the case of a two letter alphabet.}.
\end{ex}

\begin{figure}[h]
\begin{center}
\raisebox{0.0cm}{\includegraphics[scale=.60]{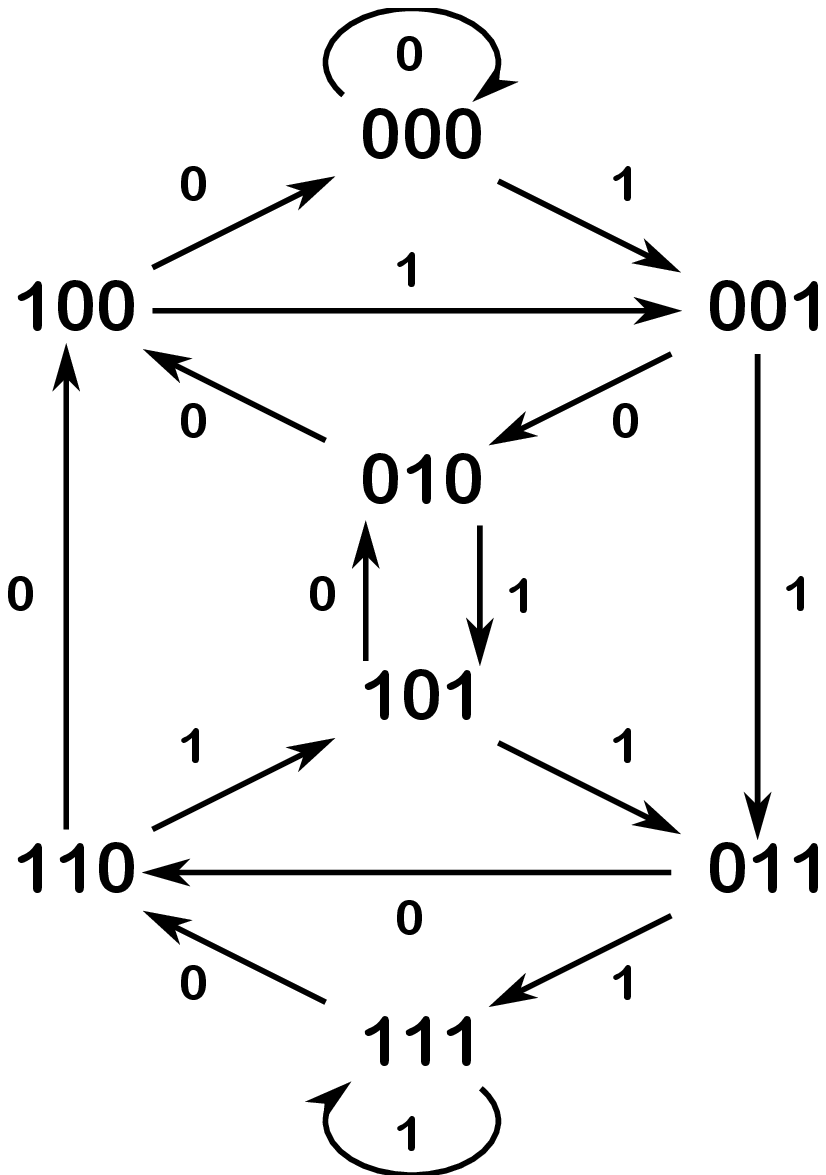}}
  \hspace*{2cm}
\raisebox{6.5mm}{\includegraphics[scale=.60]{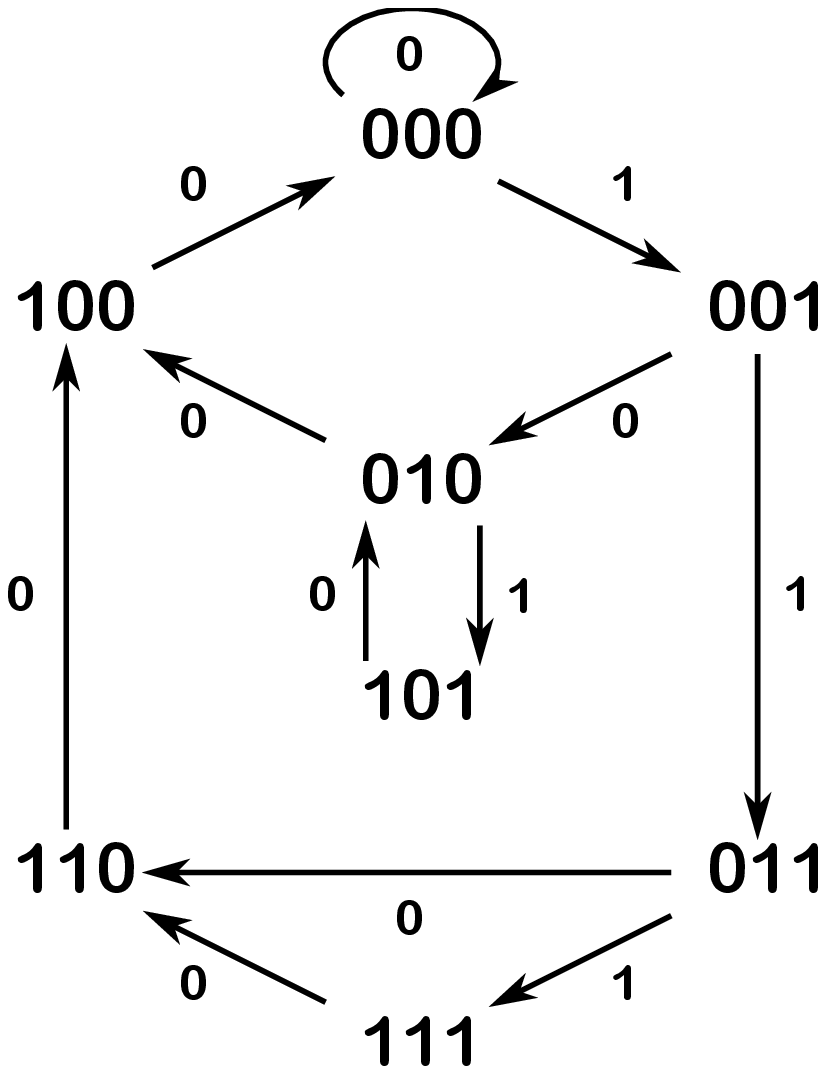}}
\end{center}
\caption{The full slider graph and the corresponding transversally Markov slider graph determined by the golden mean topological Markov chain.}
\label{fig:golden}
\end{figure}

\begin{figure}[h]
\begin{center}
\raisebox{0.0cm}{\includegraphics[scale=.60]{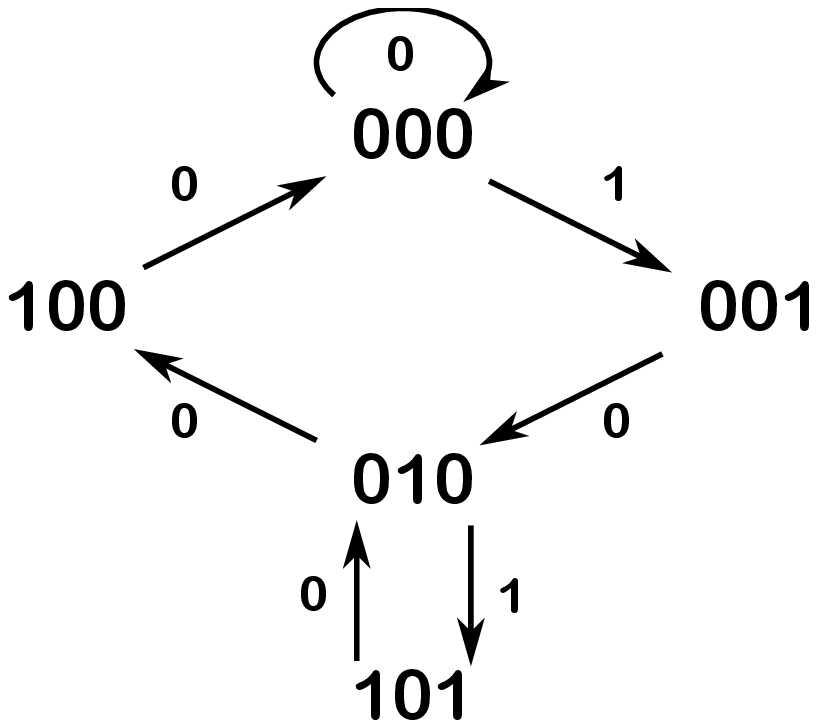}}
  \hspace*{2cm}
\raisebox{13.4mm}{\includegraphics[scale=.60]{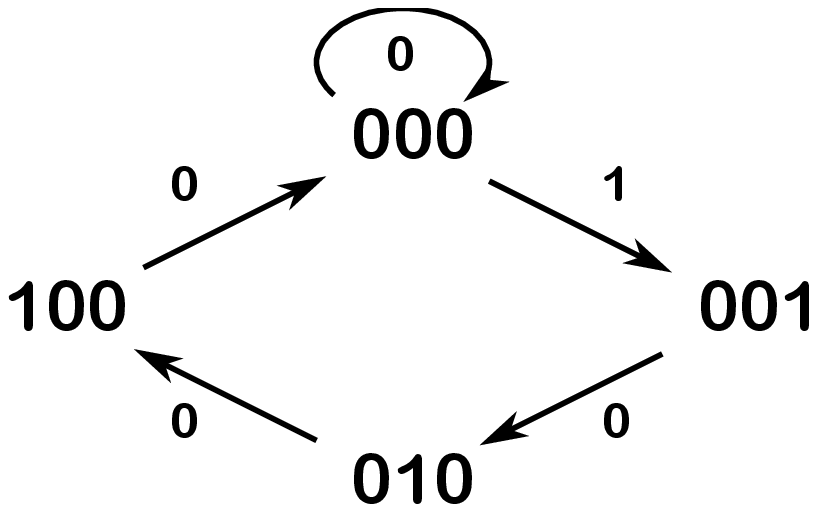}}
\end{center}
\caption{Induced subgraphs of the transversally Markov slider graph determined by the golden mean topological Markov chain.}
\label{fig:golden2}
\end{figure}

The following class of examples of transversally Markov circular slider graphs is inspired by the \emph{lamplighters} (see below \secref{sec:lamplighter}) and is based on the notion of the \emph{Cayley graph of a group} (the use of which is currently becoming popular in the theory of interconnection networks, see \cite{Ryu-Noel-Tang12}, \cite{Camelo-Papadimitriou-Fabrega-Vila14}). Let us first remind that the (directed) \textsf{Cayley graph} ($\equiv$ \textsf{Cayley topological Markov chain})
$$
\ora\C(G,K)
$$
on a group $G$ determined by a subset $K\subset G$ has the vertex set $G$ and the arrows
$$
g\mapstoto gk  \;, \qquad g\in G,k\in K \;.
$$
More generally, given a (right) action of a group $G$ on a set $X$, and a subset $K\subset G$, one defines the associated (directed) \textsf{Schreier graph} ($\equiv$ \textsf{Schreier topological Markov chain})
$$
\ora\Sch(X,K)
$$
with the vertex set $X$ and the arrows
$$
x\mapstoto xk  \;, \qquad x\in X,k\in K \;.
$$

\begin{dfn} \label{dfn:slidergroup}
Let $K$ be a subset of a group $G$. The associated \textsf{Cayley circular slider graph}
$$
\ora\Ss_{\ii K}(G^n) = \ora\Ss_{\ii \ora\C(G,K)}(G^n)
$$
is the transversally Markov circular slider of span $n$ over the alphabet $G$ determined by the Cayley topological Markov chain $\ora\C(G,K)$, i.e., its vertex set is $G^n$, and the arrows are
$$
(g_1,g_2,\dots,g_n) \mapstoto (g_2,g_3,\dots,g_n,g_1 h) \;,\qquad g_i\in G,\; h\in K \;.
$$
More generally, given a (right) action of a group $G$ on an action space $X$ and a subset $K\subset G$, the associated \textsf{Schreier circular slider graph}
$$
\ora\Ss_{\ii K}(X^n) = \ora\Ss_{\ii \ora\Sch(X,K)}(X^n)
$$
is the transversally Markov circular slider of span $n$ over the alphabet $X$ determined by the Schreier topological Markov chain $\ora\Sch(X,K)$, i.e., its vertex set is $X^n$, and the arrows are
$$
(x_1,x_2,\dots,x_n) \mapstoto (x_2,x_3,\dots,x_n,x_1 h) \;,\qquad x_i\in X,\; h\in K \;.
$$
\end{dfn}

Following \dfnref{dfn:top}, one can also consider the induced subgraphs $\ora\Ss_{\ii K}[\V]$ determined by various subsets $\V\subset G^n$ or $\V\subset X^n$ (cf. \secref{sec:circularexamples}).

\begin{rem}
In the case $K=G$ the graph $\ora\Ss_{\ii K}(G^n)$ coincides with the full slider graph $\ora\B_{\ii G}^n$, whereas for $K\subsetneqq G$ the graphs $\ora\Ss_{\ii K}(G^n)$ have the same vertex set~$G^n$ as $\ora\B_{\ii G}^n$, but fewer arrows. As we shall see below (\thmref{thm:schC}), the graphs $\ora\Ss_{\ii K}(G^n)$ are actually \emph{Schreier graphs of circular lamplighter groups}.
\end{rem}

\section{Lamplighters over cyclic groups} \label{sec:lamplighter}

Before discussing the relationship between slider graphs and lamplighter groups let us first remind the basic definitions concerning wreath products and lamplighters.

\begin{dfn}
The (restricted) \textsf{wreath product}
$$
G=A\wr B
$$
with the \textsf{active} (or, \textsf{base}) \textsf{group}~$A$ and the \textsf{passive group} (or, \textsf{group of states}) $B$ is the semi-direct product
$$
G = A\sd\fun(A,B)
$$
of the group $A$ and the group
$$
\fun(A,B) \cong \bigoplus_{a\in A} B
$$
of \textsf{finitely supported $B$-valued configurations} (i.e., those that take values different from the identity of $B$ at finitely many points only) on $A$ with the operation of pointwise multiplication (i.e., the direct sum of the copies of the group $B$ indexed by $A$), on which the group $A$ acts by translations.
\end{dfn}

In our notation for the wreath and semi-direct products the active group is always on the left (in accordance with the syntactic structure of the English language). However, quite often one also uses the notation in which $A$ and $B$ are switched. In this paper we are interested just in the situation when the \emph{active group $A$ is cyclic} (in particular, abelian, so that we shall use the additive notation for the group operation in~$A$).

In the context of functional and stochastic analysis the groups $\Z^d\wr\Z_2$ were first introduced by Vershik and the author \cite{Kaimanovich-Vershik83} under the name of the \textsf{groups of dynamical configurations}. Nonetheless, this term did not stick, and the current generally accepted standard is to call them \textsf{lamplighter groups} (apparently, this usage goes back to \cite{Lyons-Pemantle-Peres96a}). More general wreath products are also sometimes called lamplighter groups.

Below, if the cyclic group $A$ is finite (resp., infinite), we shall call $A\wr B$ a \textsf{circular} (resp., \textsf{linear}) \textsf{lamplighter group}.

\medskip

As a set, the group $G=A\wr B$ is the usual product of $A$ and the group of configurations $\fun(A,B)$. For a group element $(a,\F)\in G$ its $A$ component $a$ and its $\fun(A,B)$ component $\F$ are
usually referred to as the \textsf{lamlighter position} and the \textsf{lamp configuration}, respectively. The group operation in $G$ is ``skewed'' by using the left action of $A$ on $\fun(A,B)$ by the \textsf{group automorphisms}
$$
\T^a \F (x) = \F(x-a) \;,
$$
so that the \textsf{group multiplication} in $G$ is
\begin{equation} \label{eq:mult}
(a_1,\F_1) \cdot (a_2,\F_2) = ( a_1 + a_2, \F_1 \cdot \T^{a_1}\F_2 ) \;.
\end{equation}
The \textsf{identity} of $G$ is the pair $(0,\vn)$, where $\vn$ (the \textsf{identity of the group of configurations} $\fun(A,B)$) is the \textsf{empty configuration}
$$
\vn(a)=e \qquad \forall\,a\in A \;,
$$
and $e$ is the \textsf{identity of the group of states} $B$.

The \textsf{standard generators} of $G$ are
\begin{equation} \label{eq:standard}
(\pm 1,\vn) \quad \text{and} \quad \{(0, \de_0^b)\}_{b\in K} \;,
\end{equation}
where $K$ is a fixed (symmetric) generating set of $B$, and $\de_a^b\in\fun(A,B)$ denotes the configuration defined as
$$
\de_a^b (x) = \begin{cases}
b \;, & x=a \;,\\
e \;, & x \neq a
\end{cases}
$$
(in other words, $\de_a^b$ is the generator $b$ in the $a$-indexed copy of the group $B$ from the direct sum $\fun(A,B)\cong \bigoplus_{a\in A} B$).

Then, by \eqref{eq:mult}, for any $(a,\F)\in G$
$$
(a,\F) \cdot (\pm 1,\vn) = (a\pm 1,\F) \;,
$$
so that the right multiplication by the \textsf{walk generators} $(\pm 1,\vn)$ means that the ``lamplighter'' moves along $A$ one step to the left or to the right, whereas the lamp configuration $\F$ remains intact. In the same way,
$$
(a,\F) \cdot (0, \de_0^b) = (a,\F\cdot\de_a^b) \;,
$$
so that the right multiplication by the \textsf{switch generators} $(0,\de_0^b)$ means that the position $a$ of the lamplighter in the group $A$ remains the same, whereas the state of the lamp at $a$ changes from $\F(a)$ to $\F(a)\cdot b$.

\section{Lamplighters and circular slider graphs} \label{sec:lampslider}

We shall now introduce another generating set for the group $G=A\wr B$. Let
\begin{equation} \label{eq:generators}
\begin{aligned}
& \wt B_+ = \{ (1,\de_1^b) \}_{b\in B} = \{ (1,\vn) \cdot (0,\de_0^b) \}_{b\in B} \;, \\
& \wt B_- = \left(\wt B_+\right)^{-1} = \{ (-1,\de_0^b) \}_{b\in B} = \{ (0,\de_0^b) \cdot (-1,\vn) \}_{b\in B} \;, \\
& \wt B = \wt B_- \cup \wt B_+ \;.
\end{aligned}
\end{equation}

The Cayley graphs $\ora\C\left(G,\wt B_+\right)$ and $\ora\C\left(G,\wt B_-\right)$ of the group $G$ determined by the sets $\wt B_+$ and $\wt B_-$, respectively, are the digraphs obtained from the undirected Cayley graph $\C\left(G,\wt B\right)$ with respect to the symmetric generating set $\wt B$ by taking two opposite orientations of its edges.

By \eqref{eq:mult}, the result of the right multiplication of an element $(a,\F)$ by an increment $(1,\de_1^b)\in \wt B_+$ is
$$
(a,\F) \cdot (1,\de_1^b) = (a+1, \F\cdot\de_{a+1}^b ) \;,
$$
i.e., the lamplighter moves from the position $a$ to the position $a+1$, and the value of the configuration at the arrival point $a+1$ changes from $\F(a+1)$ to $\F(a+1)\cdot b$. In the same way, the right multiplication by $(-1,\de_0^b)\in \wt B_-$ amounts to changing the value of the configuration at the departure point $a$ from $\F(a)$ to $\F(a)\cdot b$ and moving from $a$ to $a-1$. Therefore, the generators from $\wt B_+$ (resp., from $\wt B_-$) can be called \textsf{walk-right---switch} (resp., \textsf{switch---walk-left}) \textsf{generators} (this nomenclature for various kinds of elements of lamplighter groups was, in the context of random walks, coined by Wolfgang Woess, and apparently first appeared in print in \cite{Lehner-Neuhauser-Woess08}).

Now, in order to make the picture more symmetric, it is convenient to ``mark'' the active group $A$ with the width 2 \textsf{slider} (sliding window) over the positions $a$ and $a+1$ (or, equivalently, to distinguish the edge between $a$ and $a+1$) rather than to point at lamplighter's position~$a$, see \figref{fig:window}.

\begin{figure}[h]
\begin{center}
$$
\begin{aligned}
& \hskip -2.8cm \cdots \hskip 1.2cm
\mbox{\includegraphics[scale=.02]{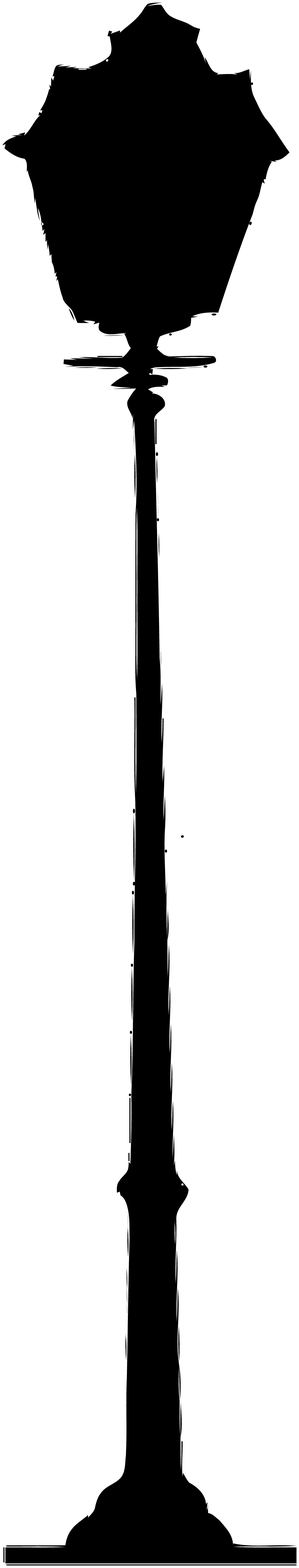}} \hskip 1.6cm \mbox{\includegraphics[scale=.02]{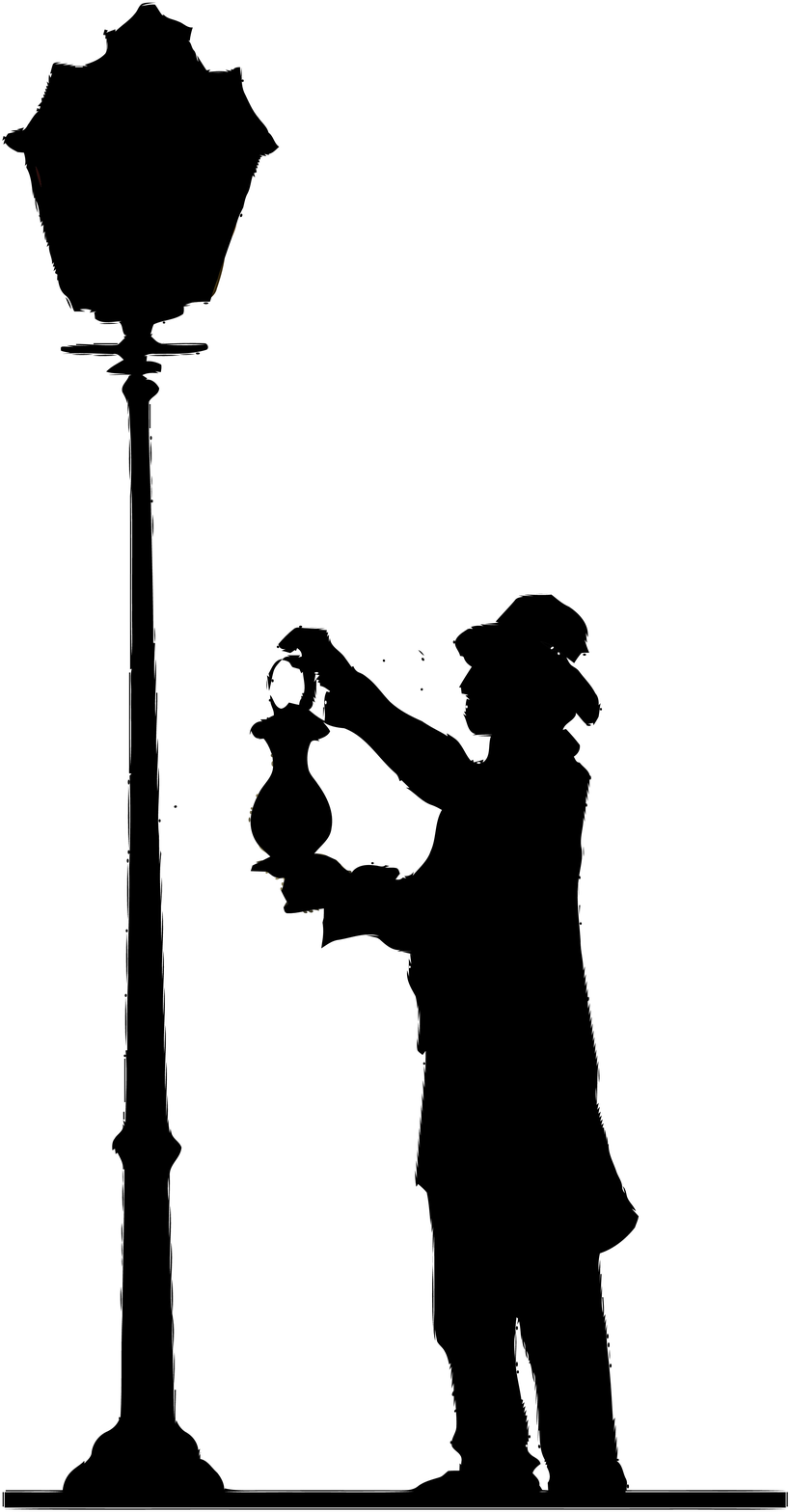}} \hskip 1.5cm
\mbox{\includegraphics[scale=.02]{lamppostlargenew10.eps}} \hskip 1.7cm
\mbox{\includegraphics[scale=.02]{lamppostlargenew10.eps}} \hskip 1.3cm
\cdots
\\
& {\hskip 1.6cm \DownArrow[20pt][>=latex,ultra thick]} \\
\cdots \quad \rule[1mm]{5mm}{0.4pt} \cf{${\scriptstyle \F(a-1)}$} & \rule[1mm]{5mm}{0.4pt} \myboxed{ \!\! \rule[1mm]{5mm}{0.4pt}\cf{${\scriptstyle \F(a)}$} \rule[1mm]{1cm}{1.5pt} \cf{${\scriptstyle \F(a+1)}$} \rule[1mm]{4mm}{0.4pt} } \hskip-1mm \rule[1mm]{5mm}{0.4pt} \cf{${\scriptstyle \F(a+2)}$} \rule[1mm]{5mm}{0.4pt} \quad \cdots
\end{aligned}
$$
\end{center}
          \caption{Lamplighter's window (slider).}
          \label{fig:window}
\end{figure}

Then the multiplication by an element $(1,\de_1^b)\in \wt B_+$ amounts to shifting the slider one position to the right and multiplying the state at the intersection of the old and the new sliders by $b$, see \figref{fig:right}.

\begin{figure}[h]
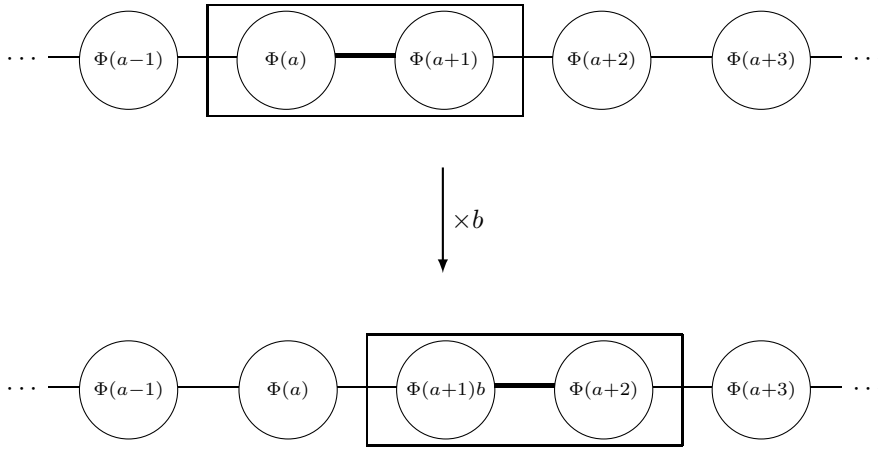

\begin{center}
$$
\begin{aligned}
\\
&\cdots \; \rule[1mm]{4mm}{0.4pt}
\cf{${\scriptstyle \F(a-1)}$}
\rule[1mm]{3.8mm}{0.4pt}
\myboxed{ \!\! \rule[1mm]{4.3mm}{0.4pt}
\cf{${\scriptstyle \F(a)}$}
\hskip -.1mm \rule[1mm]{8mm}{1.5pt} \hskip -.1mm
\cf{${\scriptstyle \F(a+1)}$}
\rule[1mm]{3.1mm}{0.4pt}
}
\hskip-1mm \rule[1mm]{4.8mm}{0.4pt}
\cf{${\scriptstyle \F(a+2)}$}
\rule[1mm]{8mm}{0.4pt}
\cf{${\scriptstyle \F(a+3)}$}
\rule[1mm]{4mm}{0.4pt} \; \cdots
\\ \\
& {\hskip 5.85cm \DownArrow[40pt][>=latex, thick]} \raisebox{6mm}{\;$\times b$}
\\ \\
&\cdots \; \rule[1mm]{4mm}{0.4pt}
\cf{${\scriptstyle \F(a-1)}$}
\rule[1mm]{8mm}{0.4pt}
\cf{${\scriptstyle \F(a)}$}
\rule[1mm]{3.8mm}{0.4pt}
\myboxed{ \!\! \rule[1mm]{4.3mm}{0.4pt}
\cf{${\scriptstyle \F(a+1)b}$}
\hskip -.1mm \rule[1mm]{8mm}{1.5pt} \hskip -.1mm
\cf{${\scriptstyle \F(a+2)}$}
\rule[1mm]{3.1mm}{0.4pt}
}
\hskip-1mm \rule[1mm]{4.8mm}{0.4pt}
\cf{${\scriptstyle \F(a+3)}$}
\rule[1mm]{4mm}{0.4pt} \; \cdots
\\
\phantom{z}
\end{aligned}
$$
\end{center}
          \caption{Walk-right---switch generators.}
          \label{fig:right}
\end{figure}

In a perfectly symmetrical way the multiplication by
$$
(-1,\de_0^b)=(1,\de_1^{b^{-1}})^{-1}\in \wt B_-
$$
amounts to shifting the slider one position to the left and multiplying the state at the intersection of the old and the new sliders by $b$, see \figref{fig:left}.

\begin{figure}[h]
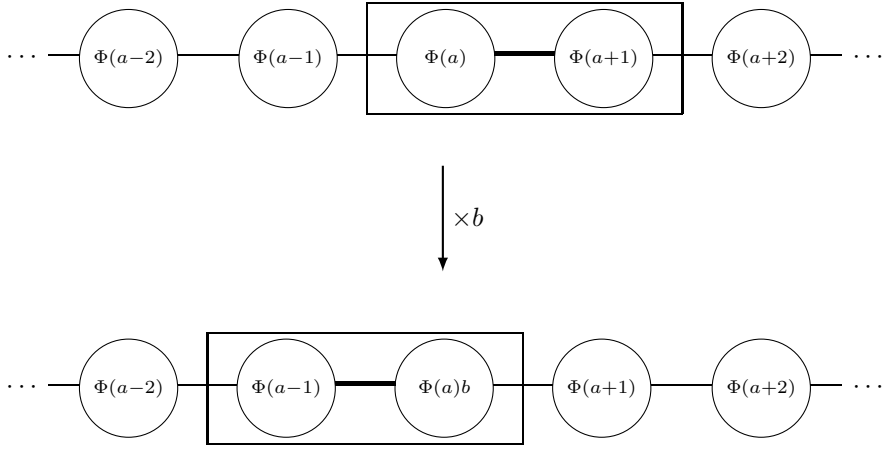

\begin{center}
$$
\begin{aligned}
\\
&\cdots \; \rule[1mm]{4mm}{0.4pt}
\cf{${\scriptstyle \F(a-2)}$}
\rule[1mm]{8mm}{0.4pt}
\cf{${\scriptstyle \F(a-1)}$}
\rule[1mm]{3.8mm}{0.4pt}
\myboxed{ \!\! \rule[1mm]{4.3mm}{0.4pt}
 \cf{${\scriptstyle \F(a)}$}
  \hskip -.1mm \rule[1mm]{8mm}{1.5pt} \hskip -.1mm
 \cf{${\scriptstyle \F(a+1)}$}
 \rule[1mm]{3.1mm}{0.4pt}
 }
\hskip-1mm \rule[1mm]{4.8mm}{0.4pt}
\cf{${\scriptstyle \F(a+2)}$}
\rule[1mm]{4mm}{0.4pt} \; \cdots \\
\\
& {\hskip 5.85cm \DownArrow[40pt][>=latex, thick]} \raisebox{6mm}{\;$\times b$}
\\
\\
&\cdots \; \rule[1mm]{4mm}{0.4pt}
\cf{${\scriptstyle \F(a-2)}$}
\rule[1mm]{3.8mm}{0.4pt}
\myboxed{ \!\! \rule[1mm]{4.3mm}{0.4pt}
\cf{${\scriptstyle \F(a-1)}$}
\hskip -.1mm \rule[1mm]{8mm}{1.5pt} \hskip -.1mm
\cf{${\scriptstyle \F(a)b}$}
\rule[1mm]{3.1mm}{0.4pt} }
\hskip-1mm \rule[1mm]{4.8mm}{0.4pt}
\cf{${\scriptstyle \F(a+1)}$}
\rule[1mm]{8mm}{0.4pt}
\cf{${\scriptstyle \F(a+2)}$}
\rule[1mm]{4mm}{0.4pt} \; \cdots
\\
\phantom{z}
\end{aligned}
$$
\end{center}
          \caption{Switch---walk-left generators.}
          \label{fig:left}
\end{figure}

Thus, the arrows of the Cayley graph $\ora\C\left(G,\wt B_+\right)$ on \figref{fig:right} look precisely like the de Bruijn transitions from \figref{fig:slidermoves}. An important difference, however, is that in the case of slider graphs the circular words in the alphabet $B$ are marked only once (with the position of the slider), whereas in the case of lamplighter groups the circular words are endowed with two pointers (both the position of the identity of the group $A$ and the position of the lamplighter). Therefore, in order to describe the slider graphs in terms of the lamplighter group one just has to eliminate the additional pointer, or, in other words, to eliminate the $A$ component of the elements $(a,\F)\in G=A\wr B$.

At the formal level, the group $A$ embeds into $G$ as the subgroup
$$
\{(a,\vn): a\in A\} \cong A \;.
$$
By formula \eqref{eq:mult},
$$
(a',\vn) (a,\F) = (a'+a, \T^{a'}\F) \qquad\forall\,a'\in A, (a,\F)\in G \;,
$$
which means that the left multiplication by $(a',\vn)$ amounts to shifting both lamplighter's position $a$ and the lamp configuration $\F$ by $a'$. Thus, the map
\begin{equation} \label{eq:change}
(a,\F) \mapsto \T^{-a}\F
\end{equation}
is constant on the cosets $Ag\subset G$, and therefore it allows one to identify the homogeneous space $A\bs G$ with the space of configurations $\fun(A,B)$, or, if the group~$A$ is finite, just with the set $B^n$ of the words of length $n=|A|$ in the alphabet $B$. Namely, the word $\bb\cong \T^{-a}\F$ is the sequence of the values of $\F$ read clockwise starting from the position adjacent to lamplighter's position $a$, i.e.,
$$
\bb = \be_1\be_2\dots\be_n \;, \quad\text{where}\quad\be_i=\F(a+i) \quad(\text{addition}\!\!\!\mod n) \;.
$$
In these terms the right action of the group $G$ on the space $A\bs G\cong B^n$ takes the form
\begin{equation} \label{eq:action}
\bb.(a,\F) = \bb. (0,\F) \cdot (a,\vn) = \T^{-a} (\bb\cdot\F) \;.
\end{equation}

The reason for the appearance of the minus sign in formula \eqref{eq:action} is that the map \eqref{eq:change} essentially consists in passing from the ``fixed coordinate system'' to the ``lamplighter coordinate system'' (it is the position of the lamplighter that becomes the reference point), and therefore, if the lamplighter moves in the clockwise direction (say, if $a=1$) in the fixed coordinate system, then the lamp configuration moves in the opposite anticlockwise direction with respect to the lamplighter (cf. \figref{fig:lettersmove} and \figref{fig:pointermoves}).

Then, as it follows from comparing \figref{fig:right} and \figref{fig:slidermoves}, the right action of the elements from $\wt B_+$ on $A\bs G\cong B^{|A|}$ consists precisely in de Bruijn transitions. Thus, we have proved

\begin{thm} \label{thm:sch}
Let $A$ be a finite cyclic group. Then the Schreier digraph
$$
\Sch\left(A\bs G,\wt B_+\right) \cong \Sch\left(B^{|A|},\wt B_+\right)
$$
of the action \eqref{eq:action} of the wreath product $G=A\wr B$ on the homogeneous space
$A\bs G\cong B^{|A|}$ with respect to the generating set $\wt B_+$ \eqref{eq:generators} is isomorphic to the de Bruijn digraph $\ora\B_{\ii B}^{|A|}$ of span $|A|$ over the alphabet $B$.
\end{thm}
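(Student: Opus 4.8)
The plan is to make rigorous the three-step reduction sketched just above the statement: identify the homogeneous space $A\bs G$ with $B^n$, transport the right $G$-action along that identification, and then evaluate the transported action on the generating set $\wt B_+$.

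First I would check that the map $(a,\F)\mapsto\T^{-a}\F$ of \eqref{eq:change} is constant on the left cosets $Ag$. By the multiplication rule \eqref{eq:mult} one has $(a',\vn)(a,\F)=(a'+a,\T^{a'}\F)$, and $\T^{-(a'+a)}\bigl(\T^{a'}\F\bigr)=\T^{-a}\F$, so the map descends to a bijection $A\bs G\to\fun(A,B)$. When $A$ is finite of order $n=|A|$ the target is the word set $B^{n}$, the coset of $(a,\F)$ corresponding to $\bb=\be_1\be_2\dots\be_n$ with $\be_i=\F(a+i)$ (indices mod $n$); choosing the representative with lamplighter position $0$ I may write $\bb(i)=\be_i$ for $i\in\Z_n$, so in particular $\bb(0)=\be_n$.

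Next I would transport the right action of $G=A\wr B$ on $A\bs G$. Writing $(a,\F)=(0,\F)\cdot(a,\vn)$ and using \eqref{eq:mult}, the action takes the form \eqref{eq:action}, $\bb.(a,\F)=\T^{-a}(\bb\cdot\F)$, so it suffices to evaluate it on the generators $(1,\de_1^b)\in\wt B_+$. A direct computation gives $\bb'=\bb.(1,\de_1^b)=\T^{-1}(\bb\cdot\de_1^b)$, hence $\be_i'=\bb'(i)=(\bb\cdot\de_1^b)(i+1)$: for $1\le i\le n-1$ the position $i+1$ avoids the support $\{1\}$ of $\de_1^b$, so $\be_i'=\be_{i+1}$, while $\be_n'=(\bb\cdot\de_1^b)(1)=\be_1 b$. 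Therefore
$$
\bb=\be_1\be_2\dots\be_n\ \mapstoto\ \bb'=\be_2\be_3\dots\be_n(\be_1 b) \;,
$$
which is precisely the de Bruijn transition \eqref{eq:dBtrans} that drops the initial letter $\be_1$ and appends the new final letter $\be_{n+1}=\be_1 b$. Since $b\mapsto\be_1 b$ is a bijection of $B$, the arrows issued from $\bb$ in $\Sch\bigl(B^{|A|},\wt B_+\bigr)$ correspond one-to-one to the de Bruijn transitions issued from $\bb$ in $\ora\B_{\ii B}^{|A|}$; as the vertex sets already coincide under \eqref{eq:change}, the two digraphs are isomorphic.

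I do not expect a genuine obstacle: the argument is entirely bookkeeping. The one point demanding care is keeping the conventions consistent — the direction of the automorphism $\T$, the minus sign in \eqref{eq:action} (which reflects the passage to the lamplighter's coordinate system, cf.\ \figref{fig:lettersmove} and \figref{fig:pointermoves}), and the $\Z_n$-indexing of the circular word — so that the right action of $\wt B_+$ produces honest de Bruijn transitions rather than their inverses or a rotated copy; and one should record, as above, that no arrow is doubled or omitted, which is exactly the bijectivity of right translation in the group $B$.
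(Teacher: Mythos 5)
Your proposal is correct and follows the same route as the paper, whose proof of \thmref{thm:sch} is precisely the discussion preceding it: the identification of $A\bs G$ with $B^{|A|}$ via the coset-invariant map \eqref{eq:change}, the transported action \eqref{eq:action}, and the observation that right multiplication by $(1,\de_1^b)\in\wt B_+$ effects exactly a de Bruijn transition. Your write-up merely makes explicit the index bookkeeping that the paper delegates to the comparison of \figref{fig:right} with \figref{fig:slidermoves}, and correctly records the bijectivity of $b\mapsto\be_1 b$ ensuring the arrow sets match.
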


\begin{rem} \label{rem:gln}
A realization of de Bruijn graphs as Schreier graphs of the \emph{infinite} lamplighter groups $\Z\wr\Z_d$ was recently obtained by Grigorchuk, Leemann and Nagnibeda \cite[Theorem 4.4.1]{Grigorchuk-Leemann-Nagnibeda16}. In a sense, their Theorem 6.1.3 implicitly contains our result as it identifies the Cayley graph of the finite lamplighter group $\Z_n\wr\Z_d$ with the \emph{spider-web graph} built from the de Bruijn graph $\ora\B_{\ii d}^n$ by taking its direct (or tensor) product with $\Z_n$ (see below \secref{sec:spiderslider}). However, our argument is much more direct.
\end{rem}

If one considers the generating set \eqref{eq:generators}, then the arising Cayley and Schreier graphs only depend on the size of the group $B$. Of course, this is no longer the case if the set $\wt B_+$ \eqref{eq:generators} is replaced with
\begin{equation} \label{eq:genK}
\wt K_+=\{ (1,\de_1^b) \}_{b\in K} \;,
\end{equation}
for a proper subset $K\subsetneqq B$. However, the same argument as above still yields

\begin{thm} \label{thm:schC}
Let $A$ be a finite cyclic group, and $K\subset B$. Then the Schreier digraph
$$
\Sch\left(A\bs G,\wt K_+\right) \cong \Sch\left(B^{|A|},\wt K_+\right)
$$
of the action \eqref{eq:action} of the wreath product $G=A\wr B$ on the homogeneous space
$A\bs G\cong B^{|A|}$ with respect to the generating set $\wt K_+$ \eqref{eq:genK} is isomorphic to the Cayley circular slider digraph $\ora\Ss_{\ii K}\left(B^{|A|}\right)$ from \dfnref{dfn:slidergroup}.
\end{thm}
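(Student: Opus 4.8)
The plan is to rerun verbatim the argument preceding \thmref{thm:sch} — the one identifying the right action of $\wt B_+$ on $A\bs G\cong B^{|A|}$ with de Bruijn transitions — and then to note that replacing the generating set $\wt B_+$ by its subset $\wt K_+$ simply selects a subset of de Bruijn transitions, namely those that are transversally Markov for the Cayley topological Markov chain $\ora\C(B,K)$.

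First I would recall the coset encoding $(a,\F)\mapsto\T^{-a}\F$ identifying $A\bs G$ with $\fun(A,B)\cong B^{n}$, where $n=|A|$: a coset is represented by the word $\bb=\be_1\be_2\dots\be_n$ with $\be_i=\F(a+i)$, and the right $G$-action on $A\bs G\cong B^{n}$ is given by \eqref{eq:action}. Next I would compute the effect of a single generator $(1,\de_1^b)\in\wt K_+$ with $b\in K$: by \eqref{eq:action}, $\bb.(1,\de_1^b)=\T^{-1}(\bb\cdot\de_1^b)$, and since multiplication by $\de_1^b$ changes the entry $\be_1$ to $\be_1 b$ and leaves the rest fixed while $\T^{-1}$ rotates the word one step, the result is $(\be_2,\be_3,\dots,\be_n,\be_1 b)$. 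Therefore the arrows of $\Sch(B^{n},\wt K_+)$ are exactly the de Bruijn transitions $\al_1\al_2\dots\al_n\mapstoto\al_2\dots\al_n\al_{n+1}$ subject to the additional constraint $\al_{n+1}=\al_1 b$ for some $b\in K$.

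It remains to identify that constraint. By definition the Cayley topological Markov chain $\ora\C(B,K)$ has as admissible transitions the pairs $g\,(gh)$ with $g\in B,\ h\in K$, so the condition ``$\al_{n+1}=\al_1 b$ for some $b\in K$'' is precisely the requirement that $\al_1\al_{n+1}$ be an $\ora\C(B,K)$-admissible transition. By \dfnref{dfn:top} and \dfnref{dfn:slidergroup}, the de Bruijn transitions with this property are exactly the arrows of the Cayley circular slider digraph $\ora\Ss_{\ii K}(B^{|A|})$ over the group $B$ of span $|A|$. Since the vertex sets of $\Sch(B^{|A|},\wt K_+)$ and $\ora\Ss_{\ii K}(B^{|A|})$ coincide (both equal $B^{|A|}$) and, by the computation above, so do their arrow sets, the coset encoding is the sought digraph isomorphism; this proves \thmref{thm:schC}.

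This is a routine specialisation of the argument for \thmref{thm:sch}, so there is no genuine obstacle. The one point needing care — already dealt with in the discussion before \thmref{thm:sch} — is the orientation of the circular shift: because the encoding $(a,\F)\mapsto\T^{-a}\F$ passes to the ``lamplighter coordinate system'', a clockwise move of the lamplighter becomes an anticlockwise move of the lamp configuration, which produces the minus sign in \eqref{eq:action} and is exactly what makes $\bb.(1,\de_1^b)$ a forward de Bruijn transition rather than its reverse. Conceptually, the sole content of the theorem is that restricting the switch part of the generators to a proper subset $K\subsetneqq B$ imposes on the replaced letter precisely the Cayley-graph admissibility condition of \dfnref{dfn:top}.
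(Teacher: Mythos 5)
Your proposal is correct and follows essentially the same route as the paper: the paper's own justification of \thmref{thm:schC} is literally ``the same argument as above still yields,'' referring to the coset encoding $(a,\F)\mapsto\T^{-a}\F$ and the generator computation developed for \thmref{thm:sch}, which is exactly what you re-run with $\wt B_+$ replaced by $\wt K_+$. Your explicit verification that $\bb.(1,\de_1^b)=(\be_2,\dots,\be_n,\be_1 b)$ and the identification of the constraint $b\in K$ with $\ora\C(B,K)$-admissibility match \dfnref{dfn:slidergroup} precisely.
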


One can also obtain a similar description of the Schreier circular slider graphs from \dfnref{dfn:slidergroup} in terms of Schreier graphs of lamplighter groups.

\section{Spider slider graphs} \label{sec:spiderslider}

We shall denote by $\ora\CC$ the Cayley digraph of a cyclic group $C$ determined by the generator 1, so that the arrows of $\ora\CC$ are $i\mapstoto i+1$ (addition\!\!\! $\mod |C|$). Let us recall that the \textsf{tensor} (or, \textsf{direct}) \textsf{product of digraphs} $\G_1,\G_2$ is the digraph $\G_1\otimes\G_2$ such that both its vertex set and its edge set are the products, respectively, of the vertex sets and of the edge sets of $\G_1$ and $\G_2$, with the natural incidence relations (e.g., see \cite{Hammack-Imrich-Klavzar11}). As we have already mentioned in \remref{rem:gln}, the tensor products $\ora\CC \otimes\ora\B_{\ii m}^n$ are known as \textsf{spider-web graphs} (see Grigorchuk -- Leemann -- Nagnibeda \cite{Grigorchuk-Leemann-Nagnibeda16} and the references therein). They are parameterized by the alphabet size $m$, the span $n$ of the de Bruijn graph, and by the size $|C|$ of the group~$C$.

Theorem 6.1.3 of Grigorchuk -- Leemann -- Nagnibeda \cite{Grigorchuk-Leemann-Nagnibeda16} identifies, by using an algebraic approach based on a classification of all subgroups of the infinite lamplighter groups
$\Z\wr\Z_m$ earlier obtained by Grigorchuk -- Kravchenko \cite{Grigorchuk-Kravchenko14}, the spider-web graphs $\ora\CC \otimes\ora\B_{\ii m}^n$ with the Cayley graphs of certain finite groups for a number of combinations of the parameters $m,n,|C|$. In particular, if $|C|=n$, they establish an isomorphism of $\ora\CC_n \otimes\ora\B_{\ii m}^n$ with the Cayley graph of the finite lamplighter group $\Z_n\wr \Z_m$ endowed with the generating set \eqref{eq:generators}.

As we have just explained in \thmref{thm:sch} and \thmref{thm:schC}, the Cayley circular slider graphs $\ora\Ss_{\ii K}\left(B^{|A|}\right)$ (in particular, the de Bruijn graphs $\ora\B_{\ii B}^{|A|}$) can be obtained from the Cayley graphs of circular lamplighter groups by removing an extra pointer (which amounts to passing to the corresponding Schreier graph). Conversely, in order to recover the Cayley graph from the corresponding de Bruijn graph one just has to add a parameter $a\in A$ describing the relative position of the slider and of the identity of the group $A$, which yields

\begin{thm}\label{thm:spider}
Let $A$ be a finite cyclic group, and $K\subset B$. Then the Cayley graph $\C\left(A\wr B,\wt K_+\right)$ is isomorphic to the tensor product $\ora A\otimes\ora\Ss_{\ii K}\left(B^{|A|}\right)$.
\end{thm}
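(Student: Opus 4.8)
The plan is to make the ``change of coordinates'' \eqref{eq:change} used in \thmref{thm:sch} and \thmref{thm:schC} explicit and to observe that, written as a set, $G=A\wr B$ decomposes into the $A$-component --- ``the forgotten pointer'' --- and the $A\bs G\cong B^{|A|}$-component, and that under right multiplication by the generators from $\wt K_+$ these two components evolve \emph{independently}: the pointer always moves by $+1$, and the configuration component undergoes exactly a transversally Markov de Bruijn transition. Write $n=|A|$. Recall that the vertices of $\C\left(A\wr B,\wt K_+\right)$ are the elements $(a,\F)\in G$ and that its arrows are $(a,\F)\mapstoto(a,\F)\cdot(1,\de_1^b)=(a+1,\F\cdot\de_{a+1}^b)$ with $b\in K$. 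First I would introduce the bijection
\[
\Phi\colon G\longrightarrow A\times B^n \;,\qquad \Phi(a,\F)=\bigl(a,\,\T^{-a}\F\bigr) \;,
\]
with inverse $(a,\bb)\mapsto(a,\T^{a}\bb)$. Here the first coordinate of $\Phi(a,\F)$ is the lamplighter position, regarded as a vertex of $\ora A$, and the second coordinate $\T^{-a}\F\in\fun(A,B)\cong B^n$ is the lamp configuration read clockwise starting from the slider, i.e.\ precisely the point of $A\bs G\cong B^n$ assigned to the coset of $(a,\F)$ as in the discussion preceding \thmref{thm:sch} (this is where the conventions of \eqref{eq:change} matter).

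Next I would check that $\Phi$ sends arrows to arrows. The image under $\Phi$ of the arrow $(a,\F)\mapstoto(a+1,\F\cdot\de_{a+1}^b)$ is the pair whose first component is $a\mapstoto a+1$ --- the unique arrow out of $a$ in $\ora A$ --- and whose second component is $\T^{-a}\F\mapstoto\T^{-(a+1)}\!\left(\F\cdot\de_{a+1}^b\right)=\Sb\!\left(\T^{-a}\F\right)\cdot\de_0^b$. By the computation behind \eqref{eq:action} and \dfnref{dfn:slidergroup}, for any $\bb\in B^n$ and $b\in K$ the word $\Sb\bb\cdot\de_0^b$ is exactly the target of a transversally Markov de Bruijn transition of $\ora\Ss_{\ii K}(B^n)$ out of $\bb$: in the cyclic parameterization \eqref{eq:shift} one has $(\Sb\bb\cdot\de_0^b)_\iota=\bb_{\iota+1}$ for $\iota\neq 0$ and $(\Sb\bb\cdot\de_0^b)_0=\bb_1\,b$, i.e.\ in linear form $\be_1\be_2\dots\be_n\mapstoto\be_2\dots\be_n(\be_1 b)$, and the admissibility requirement $\be_1\be_{n+1}\in\ora\C(B,K)$ holds since $\be_{n+1}=\be_1 b$ with $b\in K$. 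Hence $\Phi$ carries every arrow of $\C\left(A\wr B,\wt K_+\right)$ to an arrow of $\ora A\otimes\ora\Ss_{\ii K}(B^n)$.

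Finally I would verify that this map on arrows is a bijection, which is where the decoupling is genuinely used. Out of a vertex $(a,\bb)$ the tensor product $\ora A\otimes\ora\Ss_{\ii K}(B^n)$ has precisely the arrows obtained by combining the unique $\ora A$-arrow $a\mapstoto a+1$ with an arrow $\bb\mapstoto\bb'$ of $\ora\Ss_{\ii K}(B^n)$; by \dfnref{dfn:slidergroup} the latter are exactly the $\bb'=\Sb\bb\cdot\de_0^b$, $b\in K$, which are pairwise distinct because $b\mapsto\de_0^b$ is injective. The $\Phi$-preimage of such a combined arrow is the Cayley-graph arrow $(a,\T^a\bb)\mapstoto(a,\T^a\bb)\cdot(1,\de_1^b)$, and since $b\mapsto(1,\de_1^b)$ is injective on $K$ this yields a bijection between the arrows out of $(a,\T^a\bb)$ and those out of $\Phi(a,\T^a\bb)=(a,\bb)$. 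Thus $\Phi$ is a digraph isomorphism, which is the assertion of the theorem.

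There is no serious obstacle: since \thmref{thm:sch} and \thmref{thm:schC} already identify $A\bs G$ with the slider graph, the whole content of \thmref{thm:spider} is that reinstating the pointer $a\in A$ contributes a single tensor factor $\ora A$ and nothing more. The only point that needs care is the bookkeeping with the conventions of \eqref{eq:change} --- in particular why the configuration coordinate rotates via $\Sb$ (opposite to the lamplighter's $+1$ step) and why the transversal-Markov condition $\al_1\al_{n+1}\in\ora\C(B,K)$ matches the fact that only the single letter entering the slider gets multiplied by $b$.
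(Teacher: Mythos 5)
Your proof is correct and follows essentially the same route as the paper, which merely asserts that one recovers the Cayley graph from the slider graph by ``adding back'' the pointer $a\in A$ removed in \thmref{thm:sch} and \thmref{thm:schC}. You simply make explicit the bijection $(a,\F)\mapsto\bigl(a,\T^{-a}\F\bigr)$ and the resulting decoupling of the two coordinates under right multiplication by $\wt K_+$, which is exactly the content the paper leaves implicit.
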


In particular, for $K=B=\Z_m$ we recover (in a much more direct way) the aforementioned result from \cite[Theorem 6.1.3]{Grigorchuk-Leemann-Nagnibeda16}.

The construction of spider-web graphs obviously carries over to arbitrary circular slider graphs:

\begin{dfn}
We shall call the tensor product $\ora A\otimes\ora\Ss$ of the Cayley digraph $\ora A$ of a cyclic group $A$ and of a circular slider graph $\ora\Ss$ a \textsf{spider slider graph}.
\end{dfn}

Numerous interesting properties of the classical spider-web graphs discussed in
\cite{Grigorchuk-Leemann-Nagnibeda16} and, in more detail, in \cite{Leemann16} suggest that it would be interesting to study more general spider slider graphs associated with the slider graphs considered in \secref{sec:circularexamples}, \secref{sec:periodic} and \secref{sec:transMark}.

\bibliographystyle{amsalpha}
\bibliography{kaimanovich.bbl}

\end{document}